\documentclass[11pt,reqno]{amsart}

\usepackage[a4paper,margin=1.05in]{geometry}
\usepackage{amsmath,amssymb,amsfonts,mathtools}
\usepackage{microtype}
\usepackage{hyperref}

\hypersetup{
  colorlinks=true,
  linkcolor=blue,
  citecolor=blue,
  urlcolor=blue,
  pdftitle={Projectional Continuous Data Assimilation on the Torus:
    Resonant and Kernel-Free Regimes},
  pdfauthor={Tsogtgerel Gantumur},
  pdfsubject={Continuous data assimilation for the two-dimensional
    Navier--Stokes equations},
  pdfkeywords={continuous data assimilation, Navier--Stokes equations,
    projectional nudging, observability, synchronization}
}

\theoremstyle{plain}
\newtheorem{theorem}{Theorem}[section]
\newtheorem{lemma}[theorem]{Lemma}
\newtheorem{proposition}[theorem]{Proposition}
\newtheorem{corollary}[theorem]{Corollary}

\theoremstyle{definition}
\newtheorem{definition}[theorem]{Definition}

\newtheorem{example}[theorem]{Example}

\theoremstyle{remark}
\newtheorem{remark}[theorem]{Remark}

\numberwithin{equation}{section}

\newcommand{\R}{\mathbb{R}}
\newcommand{\T}{\mathbb{T}}
\newcommand{\Z}{\mathbb{Z}}
\newcommand{\dd}{\,\mathrm{d}}
\newcommand{\esssup}{\operatorname*{ess\,sup}}

\title[Projectional CDA for 2D NSE]
{Projectional Continuous Data Assimilation on the Torus:
Resonant and Kernel-Free Regimes}

\author{Tsogtgerel Gantumur}

\address{McGill University, Montr\'{e}al, QC, Canada}
\address{National University of Mongolia, Ulaanbaatar, Mongolia}
\address{Institute of Mathematics and Digital Technology, Mongolian Academy of Sciences, Ulaanbaatar, Mongolia}
\email{gantumur.tsogtgerel@mcgill.ca}

\date{July 18, 2026}

\subjclass[2020]{Primary 35Q30; Secondary 93C20, 35B40, 37L15.}

\keywords{continuous data assimilation, Navier--Stokes equations,
projectional nudging, one-component observations, observability,
synchronization}

\begin{document}

\begin{abstract}
We study continuous data assimilation for the two-dimensional incompressible
Navier--Stokes equations on the periodic torus using a single signed scalar
velocity projection in a prescribed spatially varying direction.  We prove
exponential synchronization in both \(L^2\) and \(H^1\) through two
complementary mechanisms.  In the regular resonant regime, nontrivial invisible
currents are controlled through a moving-frame expansion, adapted coordinates,
and Farhat--Lunasin--Titi logarithmic estimates, provided the resulting geometric
shear defect is viscously absorbable.  This recovers the periodic one-component
mechanism for constant rational directions and applies to genuinely nonconstant
projection fields.  In the kernel-free regime, qualitative injectivity and
compactness yield observability with arbitrarily small viscous leakage, giving
synchronization for every sufficiently large gain without an upper gain
restriction.
For sufficiently regular projection fields, both mechanisms extend to
\(L^2\)-stable Type-I coarse scalar observations satisfying a first-order approximation property, under the usual
gain--resolution condition.  A common parabolic smoothing argument then upgrades
all four \(L^2\)-synchronization results to \(H^1\)-synchronization without
additional observation hypotheses.
\end{abstract}

\maketitle
\setcounter{tocdepth}{1}
\tableofcontents

\section{Introduction}
\label{sec:intro}

Continuous data assimilation (CDA), or nudging, reconstructs an unknown reference
trajectory of a dissipative evolution equation by evolving a second copy of the
system with feedback driven by observations.  For the two-dimensional
Navier--Stokes equations, the Azouani--Olson--Titi (AOT) algorithm established
exponential synchronization from sufficiently resolved coarse velocity data
\cite{AOT2014}.  Farhat, Lunasin, and Titi (FLT) subsequently showed that, on a
two-dimensional periodic domain, observations of only one velocity component can
suffice \cite{FLT2016}.  The CDA framework has since been extended to nonlinear
feedback, dynamic and mobile observers, recovery of unknown body forces,
higher-order synchronization, and mesh-free interpolant observables
\cite{CarlsonLariosTiti2024,FranzLariosVictor2022,
BiswasBradshawJolly2023,FarhatLariosMartinezWhitehead2024,
BiswasMartinez2017,BiswasBrownMartinez2022}.

This paper studies scalar projectional observations for the periodic
two-dimensional Navier--Stokes equations.  Instead of observing a fixed Cartesian
component, we prescribe a spatially varying unit direction field \(m\) and observe
the signed scalar projection \(u\cdot m\) of the reference velocity.  Writing
\(e=v-u\) and \(r=e\cdot m\), the exact-data analysis begins from the error identity
\begin{equation}
\label{eq:error-energy-intro}
  \frac12\frac{\dd}{\dd t}|e|^2
  +
  \nu\|e\|^2
  +
  \mu|r|^2
  =
  -b(e,u,e),
\end{equation}
where \(b(a,b,c)=\int_{\mathbb T^2}(a\cdot\nabla b)\cdot c\,dx\)
is the usual Navier--Stokes trilinear form.  The central question is how the
observed scalar dissipation, together with viscosity, controls the nonlinear
production term on the right-hand side.

The answer depends on the solenoidal kernel of the scalar observation.  We develop
two complementary synchronization mechanisms.

In the \emph{resonant regime}, nonzero divergence-free mean-zero error fields are
invisible to the observation.  Constant rational projection directions provide
the basic example: periodic transverse shear modes are unobserved.  The classical
one-component FLT theorem shows that static observability is nevertheless not
necessary, because the Navier--Stokes nonlinearity can transfer information between
the observed and unobserved components.

We call a resonant projection field \emph{regular} when its transverse flow can be
straightened in periodic coordinates on a controlled finite cover of the torus.
For such fields, a moving-frame expansion reduces every non-defect contribution to
weighted variants of the FLT logarithmic forms containing the observed scalar
\(r\).  Spatial variation of the frame also produces a pure transverse term,
measured by a geometric shear defect.  When this defect is small enough to be
absorbed by viscosity, the logarithmic contribution can be balanced against the
scalar feedback in \eqref{eq:error-energy-intro}, yielding exponential
synchronization for every sufficiently large gain.  

The second mechanism applies in the \emph{kernel-free regime}, where no nonzero
solenoidal field is invisible to the scalar projection.  Compactness of the
embedding of divergence-free \(H^1\) fields into \(L^2\) upgrades this qualitative
injectivity to an observability estimate with arbitrarily small viscous leakage.
The way this estimate is used is important: the leakage controls only the
nonlinear production term in \eqref{eq:error-energy-intro}, rather than weakening
the feedback itself.  It is therefore multiplied by a reference-flow bound, not
by the nudging gain.  Exact kernel-free observations consequently synchronize
for every sufficiently large gain, with no upper gain restriction.

For constant directions, the distinction between the two mechanisms is
arithmetic.  Rational transverse directions admit periodic invisible shears and
belong to the resonant theory, whereas nonresonant directions are kernel-free.
Small divisors may enter the compact-observability constant in the latter case,
but the proof does not require an explicit inversion estimate.

We also treat Type-I coarse scalar observations, in which the observed
projection is processed by an \(L^2\)-stable operator satisfying a first-order
approximation property.  This class includes low Fourier projections, local or
mollified volume averages, and suitable \(L^2\)-stable quasi-interpolants.  A
common scalar coercivity estimate reduces the coarse analysis to the
corresponding exact-data mechanism, at the cost of the familiar
gain--resolution condition \(\mu h^2\lesssim\nu\).  In the kernel-free regime,
the approximation argument additionally requires \(W^{1,\infty}\)-regularity
of the prescribed projection field, so that the observed scalar has the
\(H^1\)-regularity needed by the Type-I estimate.

To keep the distinction between the resonant and kernel-free regimes, and their
two analytic mechanisms, in focus, we restrict the coarse theory to Type-I
observations.  Type-II observations, including raw nodal reconstructions, produce
second-order interpolation leakage and require a mixed strong-energy analysis;
we leave that extension to future work.  By contrast, once exponential
synchronization in \(L^2\) has been established, the bounded feedback can be
treated as a lower-order forcing in the periodic strong-error estimate.  The
resulting parabolic argument upgrades all four synchronization theorems to
exponential decay in \(H^1\).

The two mechanisms are complementary, but the present theorems do not exhaust the resonant class. 
A general resonant field may have more complicated transverse dynamics, possibly decomposing into invariant components with invisible currents supported on only some of them. Identifying the appropriate finer classification, and determining whether nonlinear transfer on the resonant components can be combined with compact observability on the complementary ones, would be an interesting problem. Broader kernel-free theories on surfaces and bounded domains, including localized rank-one observations and boundary anchors, are also left for subsequent work.

The paper is organized as follows.  Section~\ref{sec:setting} introduces the
periodic functional setting, the exact and Type-I projectional nudging schemes,
and their error identities.  Section~\ref{sec:geometry} develops invisible
currents, moving frames, and controlled adapted coverings.
Section~\ref{sec:energy-estimates} proves the logarithmic nonlinear compatibility
estimate, and Section~\ref{sec:conv} establishes exact synchronization in the
regular resonant regime.  Section~\ref{sec:kernel-free} develops compact
observability and exact kernel-free synchronization.
Section~\ref{sec:coarse} treats Type-I coarse observations in both regimes, and
Section~\ref{sec:V-decay} upgrades all four \(L^2\)-synchronization results to
\(H^1\)-synchronization.  Section~\ref{sec:concluding-remarks} concludes, while
Appendix~\ref{sec:wellposed} records bounded-feedback well-posedness.

\section{Preliminaries and projectional observations}
\label{sec:setting}

We work on the flat two-dimensional torus
\(\T^2=(\R/2\pi\Z)^2\),
and all functions are \(2\pi\)-periodic in both variables.  We restrict throughout
to mean-zero vector fields, as is natural for the periodic Navier--Stokes equations
after subtracting the conserved spatial mean.

\subsection{Periodic functional setting}

Define the smooth solenoidal test space by
\[
  \mathcal V
  =
  \Bigl\{
    \varphi\in C^\infty(\T^2;\R^2):
    \nabla\cdot\varphi=0,\,
    \int_{\T^2}\varphi\,dx=0
  \Bigr\}.
\]
Here and below, functions on \(\T^2\) may equivalently be viewed as
\(2\pi\)-periodic functions on \(\R^2\).  We then set
\[
  H=\overline{\mathcal V}^{\,L^2(\T^2)^2},
  \qquad
  V=\overline{\mathcal V}^{\,H^1(\T^2)^2}.
\]
Equivalently, one has
\[
  H
  =
  \Bigl\{
    u\in L^2(\T^2)^2:
    \nabla\cdot u=0,\,
    \int_{\T^2}u\,dx=0
  \Bigr\},
  \qquad
  V=H^1(\T^2)^2\cap H.
\]

The \(L^2\) inner product and norm are denoted by \((\cdot,\cdot)\) and
\(|\cdot|\), respectively.  For \(s\ge0\), \(\|\cdot\|_{H^s}\) denotes the
usual Sobolev norm on \(\T^2\).  Following standard Navier--Stokes notation,
we use \(\|u\|:=\|\nabla u\|_{L^2}\) for \(u\in V\).  Although this
quantity is only a seminorm on \(H^1(\T^2)^2\), it is a norm on \(V\),
equivalent to the full \(H^1\)-norm.  Indeed, Poincare's inequality gives
\begin{equation}
\label{eq:poincare}
  \lambda_1|u|^2\le\|u\|^2,
  \qquad
  u\in V.
\end{equation}
With the present normalization of \(\T^2\), one may take \(\lambda_1=1\).

Denote by \(P_\sigma:L^2(\T^2)^2\to H\) the orthogonal projection onto the
mean-zero solenoidal subspace.  The Stokes operator is defined by
\begin{equation}
\label{eq:Stokes-periodic}
  A=-P_\sigma\Delta,
  \qquad
  D(A)=H^2(\T^2)^2\cap H.
\end{equation}
Since the Laplacian preserves both incompressibility and zero mean, one has
\(Au=-\Delta u\) for \(u\in D(A)\).  Consequently,
\(|Au|=\|\Delta u\|_{L^2}\), which is equivalent to the \(H^2\)-norm on
\(D(A)\).

For smooth solenoidal fields, define
\[
  B(u,v)=P_\sigma\bigl((u\cdot\nabla)v\bigr),
  \qquad
  b(u,v,w)
  =
  \int_{\T^2}(u\cdot\nabla v)\cdot w\,dx.
\]
The bilinear map \(B\) extends continuously from \(V\times V\) into \(V'\),
with
\[
  \langle B(u,v),w\rangle_{V',V}=b(u,v,w).
\]
For divergence-free \(u\), periodic integration by parts gives
\begin{equation}
\label{eq:cancellation}
  b(u,v,v)=0,
\end{equation}
and, more generally,
\begin{equation}
\label{eq:skew}
  b(u,v,w)=-b(u,w,v).
\end{equation}

We shall use the standard two-dimensional Sobolev and interpolation inequalities
without further comment.  In particular, Ladyzhenskaya's inequality followed by
Young's inequality gives
\begin{equation}
\label{eq:basic-nonlinear}
  |b(e,u,e)|
  \le
  \frac{\nu}{4}\|e\|^2
  +
  \frac{C}{\nu}\|u\|^2|e|^2,
  \qquad
  e,u\in V.
\end{equation}
The sharper logarithmic compatibility estimate required in the regular resonant
regime is proved in Section~\ref{sec:energy-estimates}.

\subsection{Reference solution}

The reference solution \(u\) satisfies the periodic two-dimensional
Navier--Stokes equations
\begin{equation}
\label{eq:NSE}
  u_t+\nu Au+B(u,u)=f,
  \qquad
  u(0)=u_0\in V,
\end{equation}
where \(f\in H\) is time-independent.  In primitive variables, the system takes
the form
\[
  u_t-\nu\Delta u+(u\cdot\nabla)u+\nabla p=f,
  \qquad
  \nabla\cdot u=0,
  \qquad
  \int_{\T^2}u\,dx=0.
\]
For every \(u_0\in V\), the system has a unique global
strong solution, which eventually enters bounded absorbing sets
\cite{FoiasManleyRosaTemam,Temam}.  The synchronization theorems below state
explicitly the uniform \(L^2\)- and \(H^1\)-bounds on \(u\) required in their
gain and geometric hypotheses.

\subsection{Exact and coarse scalar projectional nudging}
\label{ss:nudging-setup}

Let \(m:\T^2\to\R^2\) be a prescribed measurable unit vector field, with
\(|m(x)|=1\) for almost every \(x\in\T^2\).  Define the associated rank-one
projection tensor by
\begin{equation}
\label{eq:Mdef}
  M(x)=m(x)\otimes m(x).
\end{equation}
Pointwise, one has \(M^2=M\), \(M^\top=M\), and \(0\le M\le I\).
Consequently, \(M\) defines a bounded, self-adjoint, nonnegative multiplication
operator on \(L^2(\T^2)^2\), with operator norm one.  No differentiability of
\(m\) is needed for this operator-theoretic formulation.  The regular resonant
theory will assume \(m\in W^{2,\infty}\), while the exact kernel-free theory
applies to measurable unit fields.

For a nudging gain \(\mu>0\), the exact projectionally nudged system is given by
\begin{equation}
\label{eq:nudged}
  v_t+\nu Av+B(v,v)
  =
  f-\mu P_\sigma M(v-u),
  \qquad
  v(0)=v_0\in V.
\end{equation}
Since \(M(v-u)=((v-u)\cdot m)m\), the feedback uses only the signed scalar
discrepancy \((v-u)\cdot m\).  For \(m=e_1\) or \(m=e_2\), this reduces to the
exact-observation form of one-component nudging.

We also consider a coarse scheme in which the scalar projection is processed by
a bounded linear operator \(I_h\in\mathcal L(L^2(\T^2))\).  For
\(z\in L^2(\T^2)^2\), define
\begin{equation}
\label{eq:Mhdef}
  M_hz
  :=
  \bigl(I_h(z\cdot m)\bigr)m.
\end{equation}
Thus \(M_h\) extracts the scalar component \(z\cdot m\), applies the observation
operator \(I_h\), and reinserts the resulting scalar field in the direction \(m\).
Since \(|m|=1\) almost everywhere, one has
\[
  |M_hz|
  =
  |I_h(z\cdot m)|
  \le
  \|I_h\|_{\mathcal L(L^2)}|z| ,
\]
so \(P_\sigma M_h\) is bounded from \(H\) to \(H\).  In general, \(M_h\)
need not be self-adjoint or nonnegative; neither property is required for the
well-posedness or error-identity arguments below.

The coarse scalar projectionally nudged system is given by
\begin{equation}
\label{eq:coarse-nudged}
  v_t+\nu Av+B(v,v)
  =
  f-\mu P_\sigma M_h(v-u),
  \qquad
  v(0)=v_0\in V.
\end{equation}
Taking \(I_h\) to be the identity recovers the exact system
\eqref{eq:nudged}.  Boundedness of \(I_h\) is sufficient for well-posedness.
In Section~\ref{sec:coarse}, we impose the additional stability and
first-order approximation assumptions defining the Type-I observation class.

\subsection{Well-posedness with bounded feedback}

The existence theory requires only boundedness of the feedback operator.

\begin{proposition}[Bounded-feedback well-posedness]
\label{prop:bounded-feedback-wellposed}
For every \(v_0\in V\), \(\mu\ge0\), and \(T>0\), each of
\eqref{eq:nudged} and \eqref{eq:coarse-nudged} has a unique strong solution
satisfying
\[
  v\in C([0,T];V)\cap L^2(0,T;D(A)),
  \qquad
  v_t\in L^2(0,T;H).
\]
The same conclusion holds for the more general system
\[
  v_t+\nu Av+B(v,v)
  =
  f-\mu P_\sigma\mathcal M(t)(v-u),
\]
provided that
\(\mathcal M\in
L^\infty(0,T;\mathcal L(L^2(\T^2)^2))\)
is strongly measurable.  In particular, the exact and coarse nudged systems are
globally well posed.
\end{proposition}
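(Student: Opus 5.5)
The plan is to run the standard Galerkin scheme for the periodic two-dimensional Navier--Stokes equations, treating the feedback \(-\mu P_\sigma\mathcal M(t)(v-u)\) as a linear, bounded, lower-order perturbation. The reference solution \(u\) is the strong solution of \eqref{eq:NSE}, so \(u\in C([0,T];V)\cap L^2(0,T;D(A))\). The feedback is affine in \(v\): it equals \(-\mu P_\sigma\mathcal M(t)v+g(t)\) with \(g:=\mu P_\sigma\mathcal M(t)u\in L^\infty(0,T;H)\). Strong measurability of \(\mathcal M\), together with continuity of \(u\), makes \(g\) Bochner measurable. The exact case \(\mathcal M=M\) and the coarse case \(\mathcal M=M_h\) are constant-in-time bounded operators on \(L^2(\T^2)^2\), so both are instances of the general statement. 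I will therefore prove only the general statement, with \(K:=\|\mathcal M\|_{L^\infty(0,T;\mathcal L)}\).

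\textbf{Step 1 (Galerkin approximation).} Let \(P_n\) denote the projection onto the Stokes eigenfunctions with \(|k|\le n\). Consider the ODE system
\[
v_n'+\nu Av_n+P_nB(v_n,v_n)=P_nf-\mu P_nP_\sigma\mathcal M(t)(v_n-u),\qquad v_n(0)=P_nv_0.
\]
The right-hand side is locally Lipschitz in \(v_n\) and measurable and bounded in \(t\), so Carath\'eodory's theorem gives a local absolutely continuous solution. Global existence on \([0,T]\) will follow from the a priori bounds below.

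\textbf{Step 2 (a priori estimates).} First I take the inner product with \(v_n\). By \eqref{eq:cancellation} the nonlinear term drops out, and the feedback is bounded by \(\mu K(|v_n|+|u|)|v_n|\). Gronwall's inequality then gives \(v_n\) bounded in \(L^\infty(H)\cap L^2(V)\). Next I take the inner product with \(Av_n\). On the torus one has \((B(w,w),Aw)=0\) for \(w\in D(A)\), so the nonlinear term again vanishes. The remaining terms are controlled by
\[
\tfrac12\tfrac{\dd}{\dd t}\|v_n\|^2+\nu|Av_n|^2\le \bigl(|f|+\mu K|v_n|+\mu K|u|\bigr)|Av_n|,
\]
and after Young's inequality and Gronwall's inequality this bounds \(v_n\) in \(L^\infty(V)\cap L^2(D(A))\) uniformly in \(n\). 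Reading the equation off then bounds \(v_n'\) in \(L^2(H)\); for the nonlinear term I use \(|B(v,v)|\le C|v|^{1/2}\|v\|\,|Av|^{1/2}\). If the orthogonality identity is to be avoided, this same estimate combined with Young's inequality also closes the argument.

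\textbf{Step 3 (passage to the limit, uniqueness, continuity).} Aubin--Lions gives strong convergence of a subsequence in \(L^2(0,T;V)\), which suffices to pass to the limit in \(B(v_n,v_n)\). The feedback term is linear and bounded, so it converges weakly. Continuity \(v\in C([0,T];V)\) follows from the Lions--Magenes lemma, since \(v\in L^2(D(A))\) and \(v_t\in L^2(H)\). For uniqueness, let \(w=v^1-v^2\). The feedback contributes at most \(\mu K|w|^2\), and \(|b(w,v^2,w)|\le\frac\nu2\|w\|^2+\frac C\nu\|v^2\|^2|w|^2\) by \eqref{eq:basic-nonlinear}, so Gronwall's inequality gives \(w=0\).

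The only mild obstacle is measurability in \(t\) of the nonautonomous feedback in the Galerkin ODE and in its limit. Strong measurability of \(\mathcal M\) is exactly what is needed for \(t\mapsto\mathcal M(t)z(t)\) to be Bochner measurable whenever \(z\) is, and I will justify this by approximating \(z\) by simple functions. Global well-posedness then follows because \(T\) is arbitrary and the solutions are unique.
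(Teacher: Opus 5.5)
Your proposal is correct and follows essentially the same route as the paper's proof in Appendix~\ref{sec:wellposed}: Fourier--Galerkin approximation with Carath\'eodory local existence, the \(L^2\) and enstrophy estimates via \(b(v_n,v_n,v_n)=0\) and \((B(v_n,v_n),Av_n)=0\), an \(L^2(0,T;H)\) bound on \(v_n'\), Aubin--Lions compactness, Lions--Magenes for continuity into \(V\), and a Gronwall uniqueness argument. The differences are only cosmetic: you bound \(|B(v,v)|\) by \(C|v|^{1/2}\|v\||Av|^{1/2}\) rather than through \(\|v\|_{L^\infty}\), and you pass to the limit in the feedback by weak rather than strong convergence. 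Your remark on the Bochner measurability of \(t\mapsto\mathcal M(t)u(t)\) also spells out a detail the paper leaves implicit.
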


The proof is the standard Fourier--Galerkin argument for the periodic
two-dimensional Navier--Stokes equations with bounded lower-order feedback; it is
included in Appendix~\ref{sec:wellposed}.  The general formulation also covers
measurable time-dependent projection fields and uniformly bounded
time-dependent scalar observation operators, although the synchronization results
below assume that \(m\) is time-independent.

\subsection{Exact and coarse error identities}
\label{ss:error-identities}

For the remainder of the paper, write \(e:=v-u\) and \(r:=e\cdot m\).
Subtracting \eqref{eq:NSE} from the exact nudged system
\eqref{eq:nudged} gives
\begin{equation}
\label{eq:error}
  e_t+\nu Ae+B(e,u)+B(v,e)
  =
  -\mu P_\sigma Me.
\end{equation}
This form is convenient at the \(L^2\)-level because \(B(v,e)\) is skew
against \(e\).

Testing \eqref{eq:error} with \(e\), using
\eqref{eq:cancellation} and the orthogonality of \(P_\sigma\), gives the
exact error identity
\begin{equation}
\label{eq:L2-error-identity}
  \frac12\frac{d}{dt}|e|^2
  +
  \nu\|e\|^2
  +
  \mu|r|^2
  =
  -b(e,u,e).
\end{equation}
Here we used
\((Me,e)=\int_{\T^2}|e\cdot m|^2\,dx=|r|^2\).
Identity \eqref{eq:L2-error-identity} is the common starting point for
the exact regular resonant theorem in Section~\ref{sec:conv} and the exact
kernel-free theorem in Section~\ref{sec:kernel-free}.

For the coarse system \eqref{eq:coarse-nudged}, subtraction of
\eqref{eq:NSE} yields
\begin{equation}
\label{eq:coarse-error}
  e_t+\nu Ae+B(e,u)+B(v,e)
  =
  -\mu P_\sigma M_he.
\end{equation}
Since \(M_he=(I_hr)m\), testing \eqref{eq:coarse-error} with \(e\) gives
the coarse error identity
\begin{equation}
\label{eq:coarse-L2-error-identity}
  \frac12\frac{d}{dt}|e|^2
  +
  \nu\|e\|^2
  +
  \mu(I_hr,r)
  =
  -b(e,u,e).
\end{equation}
No self-adjointness of \(I_h\) is needed, because
\((P_\sigma M_he,e)=(M_he,e)=(I_hr,r)\).

Thus the exact and coarse analyses have the same nonlinear production term.
Only the scalar dissipation changes, from \(|r|^2\) to \((I_hr,r)\).
The common Type-I coercivity estimate for the latter is proved in
Section~\ref{sec:coarse}.

\section{Moving-frame geometry and projectional regimes}
\label{sec:geometry}

This section distinguishes two geometric features of a scalar projection field:
the possible existence of nonzero divergence-free currents invisible to the
observation, and the transverse geometry that supports the logarithmic
Farhat--Lunasin--Titi mechanism when such currents are present.  The kernel-free
mechanism is developed in Section~\ref{sec:kernel-free}; here we introduce the
moving-frame and adapted-covering geometry used in the regular resonant regime.

We retain the prescribed unit field \(m\) from \S\ref{ss:nudging-setup} and set
\(n:=m^\perp=(-m_2,m_1)\).  For a scalar function \(F\), write
\(D_mF:=m\cdot\nabla F\) and \(D_nF:=n\cdot\nabla F\).  For vector fields, these
directional derivatives are applied componentwise.  The regularity assumptions on
\(m\) are stated where they are needed; the regular resonant theory ultimately
uses \(m\in W^{2,\infty}(\T^2;\R^2)\).

\subsection{Invisible currents and the two mechanisms}
\label{subsec:invisible-currents}

The solenoidal kernel of the exact scalar observation \(e\mapsto e\cdot m\) is
defined by
\begin{equation}
\label{eq:invisible-current-space}
  \mathcal K_m
  :=
  \bigl\{
    e\in H:\ e\cdot m=0
  \bigr\}.
\end{equation}
We call its elements \emph{invisible currents}.  Since \(m,n\) form an
orthonormal frame, every \(e\in\mathcal K_m\) can be written as
\(e=\sigma n\), where \(\sigma=e\cdot n\).  If
\(m\in W^{1,\infty}\) and \(e\in\mathcal K_m\cap V\), incompressibility gives,
in the distributional sense,
\begin{equation}
\label{eq:invisible-current-transport}
  D_n\sigma+(\nabla\cdot n)\sigma=0.
\end{equation}
Thus invisible currents are transported along the transverse \(n\)-flow, with
a weight determined by its divergence.

This geometry yields a dichotomy between kernel-free and
kernel-nontrivial observations, together with two corresponding synchronization
mechanisms.

\begin{itemize}
\item If \(\mathcal K_m=\{0\}\), the observation is
\emph{kernel-free}.  Compactness of \(V\hookrightarrow H\) then yields
observability with arbitrarily small viscous leakage and, consequently, the
large-gain synchronization theorem of Section~\ref{sec:kernel-free}.

\item If \(\mathcal K_m\neq\{0\}\), invisible currents are present.  In this
\emph{resonant} case, synchronization may still follow from nonlinear coupling.
The argument in
\S\ref{sec:energy-estimates}--\S\ref{sec:conv} applies to the regular resonant
subclass, where the transverse flow admits a controlled adapted covering and
the associated shear defect is viscously absorbable.  More general resonant
fields remain outside the present analysis.
\end{itemize}

\subsection{Frame derivatives and shear defect}
\label{subsec:frame-defect}

The derivatives of the oriented orthonormal frame \(m,n\) are encoded by the
connection coefficients
\(a_m:=(D_m m)\cdot n\) and \(a_n:=(D_n m)\cdot n\).
Differentiating \(|m|^2=|n|^2=1\) and \(m\cdot n=0\) gives, almost
everywhere,
\begin{equation}
\label{eq:frame-directional-connection}
  D_m m=a_m n,
  \qquad
  D_m n=-a_m m,
  \qquad
  D_n m=a_n n,
  \qquad
  D_n n=-a_n m.
\end{equation}
These coefficients enter as lower-order terms in the moving-frame expansion of
the Navier--Stokes nonlinearity.  The contribution containing no factor of the
observed scalar \(r=e\cdot m\) is governed by the following defect.

\begin{definition}[Shear-defect vector]
\label{def:shear-defect}
For \(m\in W^{1,\infty}(\T^2;\R^2)\), define the associated
\emph{shear-defect vector} by
\begin{equation}
\label{eq:shear-defect-vector}
  \mathcal A_m
  :=
  D_n n-(\nabla\cdot n)n.
\end{equation}
\end{definition}

By \eqref{eq:frame-directional-connection}, one has
\(\mathcal A_m=-a_n m-(\nabla\cdot n)n\).  Consequently, if
\(u=Um+Vn\), where \(U=u\cdot m\) and \(V=u\cdot n\), then
\begin{equation}
\label{eq:u-dot-shear-defect}
  -u\cdot\mathcal A_m
  =
  a_nU+(\nabla\cdot n)V.
\end{equation}
This identity describes the coefficient of the pure transverse-shear term in the
nonlinear expansion below.

Since \(n=m^\perp\), one has
\(\|\nabla n\|_{L^\infty}=\|\nabla m\|_{L^\infty}\), and therefore
\[
  \|\mathcal A_m\|_{L^\infty}
  \le
  C\|\nabla m\|_{L^\infty}
\]
for a universal constant \(C\).  In particular, if \(m\) is constant, then
\(a_m=a_n=0\) and \(\mathcal A_m\equiv0\).

\subsection{Controlled adapted coverings}
\label{subsec:adapted-coverings}

We now introduce coordinates that straighten the transverse direction
\(n=m^\perp\).  Let
\(\widehat{\T}^2=(\R/L_y\Z)\times(\R/L_s\Z)\), with \(L_y,L_s>0\)
and periodic coordinates \(Y=(y,s)\), and let
\(\Phi:\widehat{\T}^2\to\T^2\) be a smooth finite covering map.  Since
\(\Phi\) is a local diffeomorphism, we may write
\(\Gamma(Y):=(D\Phi(Y))^{-1}\).

Define the coordinate components \(\beta_y,\beta_s\) of \(m\) by
\(\Gamma(m\circ\Phi)=\beta_y\partial_y+\beta_s\partial_s\).
Then every scalar function \(F\) satisfies
\begin{equation}
\label{eq:Dm-pullback-beta}
  (D_mF)\circ\Phi
  =
  \beta_y\,\partial_y(F\circ\Phi)
  +
  \beta_s\,\partial_s(F\circ\Phi).
\end{equation}

\begin{definition}[Controlled adapted covering]
\label{def:controlled-adapted-covering}
A unit field \(m\in W^{2,\infty}(\T^2;\R^2)\) is said to admit a
\emph{controlled adapted covering} if there exist
\(\widehat{\T}^2\) and \(\Phi\) as above, together with a positive function
\(q\in W^{2,\infty}(\widehat{\T}^2)\), such that
\begin{equation}
\label{eq:adapted-covering-vector}
  \partial_s\Phi=q\,n\circ\Phi.
\end{equation}
We also require the quantitative bound
\[
  \mathfrak C_\Phi
  :=
  \|D\Phi\|_{W^{2,\infty}}
  +
  \|\Gamma\|_{W^{2,\infty}}
  +
  \|q\|_{W^{2,\infty}}
  +
  \|q^{-1}\|_{W^{2,\infty}}
  <\infty.
\]
Constants depending only on the covering degree and
\(\mathfrak C_\Phi\) are denoted by \(C_\Phi\), while any additional
dependence on bounds for \(m\) will be indicated explicitly.
\end{definition}

For a fixed controlled adapted covering, set
\(J:=|\det D\Phi|\) and \(\rho:={J}/{q}\).

\begin{lemma}[Controlled adapted coverings imply resonance]
\label{lem:adapted-covering-implies-resonance}
If \(m\) admits a controlled adapted covering, then we have
\(\mathcal K_m\neq\{0\}\).
\end{lemma}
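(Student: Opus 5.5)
The plan is to build an explicit nonzero invisible current as the skew-gradient of a stream function that is constant along the transverse \(n\)-flow. For a scalar \(\psi\) on \(\T^2\), set \(e=\nabla^\perp\psi=(-\partial_2\psi,\partial_1\psi)\). Then \(e\) is divergence-free and has zero mean, being a derivative of a periodic function. A direct computation gives
\[
  e\cdot m=-\partial_2\psi\,m_1+\partial_1\psi\,m_2=-\nabla\psi\cdot n=-D_n\psi .
\]
So \(e\in\mathcal K_m\) exactly when \(D_n\psi=0\). Moreover \(e\neq0\) as soon as \(\psi\) is nonconstant. It therefore suffices to produce a nonconstant \(\psi\), regular enough that \(e\in H\), with \(D_n\psi=0\).

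\textbf{Step 1: pass to the cover.} By \eqref{eq:adapted-covering-vector} and the chain rule,
\[
  \partial_s(\psi\circ\Phi)=\nabla\psi(\Phi)\cdot\partial_s\Phi=q\,(D_n\psi)\circ\Phi .
\]
Since \(q>0\), the condition \(D_n\psi=0\) is equivalent to \(\psi\circ\Phi\) being a function of \(y\) alone. Conversely, a function \(\widehat\psi\) on \(\widehat{\T}^2\) descends to \(\T^2\) precisely when it is invariant under the finite deck group \(G\) of \(\Phi\).

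\textbf{Step 2: deck transformations preserve the \(s\)-circles.} Let \(\delta\in G\), so \(\Phi\circ\delta=\Phi\). Differentiating in \(s\) and using \eqref{eq:adapted-covering-vector} at both \(Y\) and \(\delta Y\) gives
\[
  D\Phi(\delta Y)\,D\delta(Y)\,\partial_s=q(Y)\,n(\Phi(Y))=\frac{q(Y)}{q(\delta Y)}\,D\Phi(\delta Y)\,\partial_s .
\]
Since \(D\Phi\) is invertible, \(D\delta\,\partial_s=(q/q\circ\delta)\,\partial_s\). Hence \(\delta\) maps each circle \(\{y=\mathrm{const}\}\) onto another such circle. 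Consequently, for any function \(g(y)\) the composition \(g\circ\delta\) is again constant on \(s\)-circles, i.e.\ a function of \(y\) only.

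\textbf{Step 3: averaging and nontriviality.} Take a smooth bump \(g\ge0\) of \(y\), supported in a small neighborhood of some \(y_0\) with \(g(y_0)>0\), and set \(\widehat\psi:=\sum_{\delta\in G}g\circ\delta\). This function is \(G\)-invariant and depends only on \(y\), so it descends to some \(\psi\) on \(\T^2\) with \(D_n\psi=0\).

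The main point to check is that \(\psi\) is not constant. Each term \(g\circ\delta\) is nonnegative and supported in a thin band around one of at most \(|G|\) circles \(\delta^{-1}(\{y=y_0\})\). Choosing the support of \(g\) small enough that these bands do not cover all of \(\widehat{\T}^2\), \(\widehat\psi\) vanishes somewhere but satisfies \(\widehat\psi\ge g(y_0)>0\) on \(\{y=y_0\}\).

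\textbf{Step 4: regularity.} Locally \(\psi=\widehat\psi\circ\Phi^{-1}\), and the bounds in \(\mathfrak C_\Phi\) give \(\psi\in W^{3,\infty}(\T^2)\). Hence \(e=\nabla^\perp\psi\in W^{2,\infty}\subset V\subset H\). Then \(e\) is a nonzero element of \(\mathcal K_m\), which proves \(\mathcal K_m\neq\{0\}\). The only delicate step is Step 2, the compatibility of deck transformations with the \(s\)-foliation, which rests on the computation displayed there.
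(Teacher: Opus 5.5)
Your proof is correct, and it takes a genuinely different route from the paper: you build the invisible current from a stream function, whereas the paper builds it from a flux density.

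\textbf{How the paper argues.} The paper defines the density directly as a fiber sum,
\(\sigma_\chi(x)=\sum_{Y\in\Phi^{-1}(x)}\chi(y)q(Y)/J(Y)\), so it is automatically well defined on the base. It then has three things to check:
\begin{itemize}
\item incompressibility, weakly, through the area formula and \(\partial_s(\varphi\circ\Phi)=q\,(n\cdot\nabla\varphi)\circ\Phi\);
\item zero mean, through the \(y\)-independent homology class of the \(s\)-loops, which forces \(\int\chi=0\);
\item nontriviality, through the atomic-measure argument.
\end{itemize}

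\textbf{What your route changes.} Writing \(e=\nabla^\perp\psi\) makes incompressibility and zero mean free. Nontriviality becomes nonconstancy of \(\psi\), which your bump-support argument settles cleanly. The cost moves into Step~2, showing that the averaged function is \(s\)-independent and descends.

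\textbf{The two constructions give the same currents.} Take \(\psi(x)=\sum_{Y\in\Phi^{-1}(x)}g(y)\). Since \(\beta_y=q/\det D\Phi\), one gets \(e\cdot n=D_m\psi=\sum_Y g'(y)\beta_y(Y)\). Hence \(\nabla^\perp\psi=\pm e_{g'}\), with the sign fixed by the orientation of \(\Phi\). The paper's constraint \(\int\chi=0\) is exactly the condition for \(\chi\) to have a periodic primitive \(g\).

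\textbf{One point to make explicit.} The claim that a \(G\)-invariant function descends to \(\T^2\) needs the deck group to act transitively on fibers, that is, \(\Phi\) must be a normal covering. This holds because \(\pi_1(\T^2)\cong\Z^2\) is abelian, so every connected covering of \(\T^2\) is normal. You should state this.

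\textbf{An alternative that avoids deck groups.} You can define \(\psi\) directly as the fiber sum above, as the paper does for \(\sigma_\chi\). Then \(D_n\psi=0\) follows because every local inverse branch \(\phi\) of \(\Phi\) satisfies \(D\phi\,n=q(\phi)^{-1}\partial_s\), so it carries \(n\)-orbits into \(s\)-circles.

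\textbf{What each approach buys.} The paper's version needs no covering-space theory. It also produces directly the linear family indexed by zero-mean \(\chi\), which matches the weighted-shear description \(\rho\widehat\sigma=\chi(y)\) used later. Your version replaces the area-formula divergence check and the homology computation with automatic properties of a curl, plus an elementary nonconstancy argument.
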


\begin{proof}
For \(\chi\in C^\infty(\R/L_y\Z)\), define a scalar function on the base
torus by
\[
  \sigma_\chi(x)
  :=
  \sum_{Y=(y,s)\in\Phi^{-1}(x)}
  \frac{\chi(y)q(Y)}{J(Y)},
\]
and set \(e_\chi:=\sigma_\chi n\).  The sum over the finite fiber makes
\(\sigma_\chi\) well defined on the base.  For every smooth periodic scalar
function \(\varphi\), the area formula and
\eqref{eq:adapted-covering-vector} give
\[
  \int_{\T^2}e_\chi\cdot\nabla\varphi\,dx
  =
  \int_{\widehat{\T}^2}
    \chi(y)q\,(n\cdot\nabla\varphi)\circ\Phi\,dY
  =
  \int_{\widehat{\T}^2}
    \chi(y)\partial_s(\varphi\circ\Phi)\,dY
  =
  0.
\]
Thus \(e_\chi\) is divergence-free and \(e_\chi\cdot m=0\).

For each \(y\), the loop \(s\mapsto\Phi(y,s)\) has a homology vector
\(k\in\Z^2\) independent of \(y\).  Hence
\[
  \int_0^{L_s}q(y,s)n(\Phi(y,s))\,ds
  =
  2\pi k,
\]
where the integral is computed using any lift of the loop to \(\R^2\).
Another application of the area formula therefore yields
\[
  \int_{\T^2}e_\chi\,dx
  =
  2\pi k\int_0^{L_y}\chi(y)\,dy.
\]
It follows that \(e_\chi\) has zero mean whenever \(\chi\) does.

It remains to choose such a \(\chi\) with \(e_\chi\not\equiv0\).  If
\(e_\chi=0\) for every smooth zero-mean \(\chi\), then, for each
\(x\in\T^2\), the finite positive atomic measure
\[
  \sum_{Y=(y,s)\in\Phi^{-1}(x)}
  \frac{q(Y)}{J(Y)}\,\delta_y
\]
would annihilate every smooth zero-mean function on \(\R/L_y\Z\).  It
would therefore be a constant multiple of Lebesgue measure, which is
impossible for a nonzero finite atomic measure.  Thus some smooth
zero-mean \(\chi\) gives \(e_\chi\neq0\).  For this choice,
\(e_\chi\in\mathcal K_m\setminus\{0\}\).
\end{proof}

In view of Lemma~\ref{lem:adapted-covering-implies-resonance}, we call a
projection field \emph{regular resonant} when it admits a controlled adapted
covering.

The adaptedness identity means that the \(s\)-coordinate curves are integral
curves of \(n\), up to the positive reparametrization \(q\).  Thus the
transverse flow becomes a periodic coordinate direction on a finite cover.
This is a global structural hypothesis, not a smallness condition; the later
smallness assumption concerns the shear defect \(\mathcal A_m\).  The covering
need not be unique, and all constants below are relative to one chosen
controlled covering.  We neither select a canonical covering nor optimize over
all possible choices.

The definition and compactness of the cover imply that \(q,J,\rho\) are
uniformly bounded above and away from zero.  Moreover, their
\(W^{2,\infty}\)-norms, as well as those of their reciprocals, are controlled
by \(C_\Phi\).  The same bounds give
\[
  \|\beta_y\|_{W^{2,\infty}}
  +
  \|\beta_s\|_{W^{2,\infty}}
  \le
  C_\Phi\|m\|_{W^{2,\infty}}.
\]

The adapted coordinates also describe the full family of invisible currents.
If \(e=\sigma n\) is an invisible current and
\(\widehat\sigma:=\sigma\circ\Phi\), then the coordinate formula for
divergence gives, in the distributional sense,
\[
  \bigl(\nabla\cdot(\sigma n)\bigr)\circ\Phi
  =
  \frac{1}{J}\,
  \partial_s\bigl(\rho\widehat\sigma\bigr).
\]
Consequently, every invisible divergence-free current pulled back from the
base has the weighted-shear form
\[
  \rho(y,s)\widehat\sigma(y,s)=\chi(y)
\]
for some periodic function \(\chi\).  Conversely, any such weighted shear
that is compatible with the identifications induced by the covering descends
to an invisible divergence-free field on the base; membership in \(H\) also
requires zero spatial mean.  This is the variable-frame analogue of the
periodic shear modes associated with constant rational directions.

The following elementary pullback identities will be used in the nonlinear
estimates.  For a scalar function \(F\) on \(\T^2\), write
\(\widehat F:=F\circ\Phi\).

\begin{lemma}[Directional derivatives in adapted coordinates]
\label{lem:directional-derivatives-adapted-cover}
Assume that \(m\) admits a controlled adapted covering.  Then every
\(F\in H^1(\T^2)\) satisfies, almost everywhere,
\[
  (D_nF)\circ\Phi
  =
  q^{-1}\partial_s\widehat F,
  \qquad
  (D_mF)\circ\Phi
  =
  \beta_y\,\partial_y\widehat F
  +
  \beta_s\,\partial_s\widehat F.
\]
\end{lemma}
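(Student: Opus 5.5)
The plan is to prove both identities first for smooth \(F\) by the chain rule, and then pass to \(F\in H^1(\T^2)\) by density.

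For smooth \(F\) on \(\T^2\), the chain rule on the covering gives \(\nabla_Y\widehat F=(D\Phi)^\top(\nabla F)\circ\Phi\). Equivalently, for any tangent vector \(w\) at \(\Phi(Y)\),
\[
  (w\cdot\nabla F)\circ\Phi=\bigl(\Gamma w\bigr)\cdot\nabla_Y\widehat F,
\]
where \(\Gamma w\) is read as a coordinate vector in \((y,s)\). Choosing \(w=m\circ\Phi\) and using the definition \(\Gamma(m\circ\Phi)=\beta_y\partial_y+\beta_s\partial_s\) gives the \(D_m\) formula; this is exactly \eqref{eq:Dm-pullback-beta}.

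For the \(D_n\) formula, the adaptedness identity \eqref{eq:adapted-covering-vector} says \(D\Phi\,\partial_s=\partial_s\Phi=q\,n\circ\Phi\). Applying \(\Gamma=(D\Phi)^{-1}\) gives \(\Gamma(n\circ\Phi)=q^{-1}\partial_s\). Hence
\[
  (D_nF)\circ\Phi=q^{-1}\partial_s\widehat F,
\]
which is just the chain rule \(\partial_s(F\circ\Phi)=(\nabla F)\circ\Phi\cdot\partial_s\Phi\) divided by \(q>0\).

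To extend to \(F\in H^1(\T^2)\), take smooth \(F_k\to F\) in \(H^1\). Composition with \(\Phi\) is bounded from \(L^2(\T^2)\) to \(L^2(\widehat{\T}^2)\): by the area formula, \(\int|G\circ\Phi|^2\,dY\le \deg\Phi\,\|J^{-1}\|_{L^\infty}\int|G|^2\,dx\), and \(J\) is bounded away from zero by \(\mathfrak C_\Phi\). Therefore \(\widehat F_k\to\widehat F\) in \(L^2\), and \((\nabla F_k)\circ\Phi\to(\nabla F)\circ\Phi\) in \(L^2\). Since \(\nabla_Y\widehat F_k=(D\Phi)^\top(\nabla F_k)\circ\Phi\) with \(D\Phi\in L^\infty\), these gradients converge in \(L^2\). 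So \(\widehat F\in H^1(\widehat{\T}^2)\), and the weak derivatives satisfy the chain rule almost everywhere.

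The coefficients \(q^{-1},\beta_y,\beta_s,m\circ\Phi,n\circ\Phi\) are bounded, so both sides of each identity converge in \(L^2\). Passing to an a.e.-convergent subsequence yields the identities almost everywhere. The only point needing care, the \(L^2\)-boundedness of pullback under the finite covering, is the area-formula bound above.
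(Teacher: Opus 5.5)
Your proof is correct and follows the paper's argument: the \(D_n\) identity is the chain rule combined with \(\Gamma(n\circ\Phi)=q^{-1}\partial_s\), which comes from the adaptedness condition, and the \(D_m\) identity is just \eqref{eq:Dm-pullback-beta}. Your density step uses the area-formula \(L^2\)-bound on pullbacks (the \(k=0\) case of Lemma~\ref{lem:cover-norm-comparison}), and it makes explicit the extension to \(F\in H^1(\T^2)\) that the paper leaves implicit.
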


\begin{proof}
The adaptedness condition \eqref{eq:adapted-covering-vector} gives
\(n\circ\Phi=q^{-1}\partial_s\Phi\).  The first identity therefore follows
from the chain rule, while the second is
\eqref{eq:Dm-pullback-beta}.
\end{proof}

A controlled finite covering preserves Sobolev norms up to constants determined
by its geometry.

\begin{lemma}[Norm comparison under a controlled adapted covering]
\label{lem:cover-norm-comparison}
Let \(\Phi:\widehat{\T}^2\to\T^2\) be the covering map of a controlled
adapted covering.  For \(0\le k\le2\) and every scalar or vector field
\(F\in H^k(\T^2)\), one has
\[
  C_\Phi^{-1}\|F\|_{H^k(\T^2)}
  \le
  \|F\circ\Phi\|_{H^k(\widehat{\T}^2)}
  \le
  C_\Phi\|F\|_{H^k(\T^2)}.
\]
\end{lemma}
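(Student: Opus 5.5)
The plan is to reduce everything to the chain rule, a change of variables, and the fact that \(\Phi\) is a finite covering of known degree. Let \(d\) be the degree of \(\Phi\). Since \(\Phi\) is a local diffeomorphism, the area formula gives, for every nonnegative measurable \(G\) on \(\T^2\),
\[
  \int_{\widehat{\T}^2}(G\circ\Phi)\,J\,dY
  =
  d\int_{\T^2}G\,dx .
\]
Because \(J\) and \(J^{-1}\) are bounded by \(C_\Phi\), this yields
\[
  C_\Phi^{-1}\int_{\T^2}G\,dx
  \le
  \int_{\widehat{\T}^2}G\circ\Phi\,dY
  \le
  C_\Phi\int_{\T^2}G\,dx .
\]
Applying this with \(G=|F|^2\) settles the case \(k=0\). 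Vector fields are handled componentwise, so it suffices to treat scalar \(F\).

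For \(k=1,2\), first take \(F\) smooth and then argue by density: smooth functions are dense in \(H^k(\T^2)\), and composition with \(\Phi\) is continuous in \(L^2\) by the \(k=0\) case. The chain rule gives
\[
  \nabla_Y\widehat F=(D\Phi)^\top\bigl((\nabla F)\circ\Phi\bigr).
\]
Differentiating once more,
\[
  \nabla_Y^2\widehat F
  =
  (D\Phi)^\top\bigl((\nabla^2F)\circ\Phi\bigr)D\Phi
  +
  \sum_j\bigl((\partial_jF)\circ\Phi\bigr)\nabla_Y^2\Phi_j .
\]
Here \(\nabla_Y^2\Phi_j\) is controlled by \(\|D\Phi\|_{W^{1,\infty}}\le\mathfrak C_\Phi\). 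The coefficients are bounded in \(L^\infty\), so pointwise \(|\nabla^j_Y\widehat F|\) is bounded by \(C_\Phi\) times \(\sum_{i\le j}|(\nabla^iF)\circ\Phi|\). Combining this with the \(k=0\) change-of-variables bound, applied to each \(|\nabla^iF|^2\), gives the upper estimate
\[
  \|\widehat F\|_{H^k}\le C_\Phi\|F\|_{H^k}.
\]

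For the lower estimate, invert the chain rule using \(\Gamma=(D\Phi)^{-1}\):
\[
  (\nabla F)\circ\Phi=\Gamma^\top\nabla_Y\widehat F .
\]
Differentiating in \(Y\) and solving for \((\nabla^2F)\circ\Phi\) expresses it through \(\Gamma\), \(\nabla_Y\Gamma\), \(\nabla_Y\widehat F\) and \(\nabla_Y^2\widehat F\), all with coefficients bounded by \(\|\Gamma\|_{W^{1,\infty}}\). Integrating over \(\widehat{\T}^2\) and using the lower half of the change-of-variables comparison then gives
\[
  \|F\|_{H^k}\le C_\Phi\|\widehat F\|_{H^k}.
\]
The factor \(d\) is absorbed into \(C_\Phi\), which is allowed because constants may depend on the covering degree. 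The fact that \(\widehat{\T}^2\) has side lengths \(L_y,L_s\) instead of \(2\pi\) causes no trouble, since only \(L^2\) integrals of derivatives are compared.

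There is no genuine obstacle here. The only point needing care is the bookkeeping in the second-order inverse formula, namely that \(\nabla_Y\Gamma\) appears there. This derivative is controlled by \(\mathfrak C_\Phi\), which includes \(\|\Gamma\|_{W^{2,\infty}}\) and is therefore more than what is needed. The density step is also routine, since every term is continuous in \(H^k\).
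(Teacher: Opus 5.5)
Your proposal is correct and follows essentially the paper's route: the area formula for \(k=0\), the chain rule with \(W^{2,\infty}\)-control of \(D\Phi\) for the upper bound, and \(\Gamma\)-controlled inversion for the lower bound. The only difference is in the lower bound. The paper applies the upper-bound argument to finitely many local inverse branches, whereas you invert the chain rule pointwise on the cover, \((\nabla F)\circ\Phi=\Gamma^\top\nabla_Y\widehat F\), and push down using the lower half of the \(k=0\) comparison with \(G=|\nabla^iF|^2\); this avoids any localization.
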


\begin{proof}
For \(k=0\), the area formula gives
\[
  \int_{\widehat{\T}^2}|F\circ\Phi|^2\,dY
  =
  \int_{\T^2}|F(x)|^2
  \sum_{Y\in\Phi^{-1}(x)}\frac{1}{J(Y)}\,dx.
\]
The two-sided estimate follows from the finite covering degree and the positive
upper and lower bounds for \(J\).  For \(k=1,2\), the upper bound follows from
the chain rule and the \(W^{2,\infty}\)-bounds for \(D\Phi\).  Applying the
same argument on a finite collection of inverse branches, whose derivatives are
controlled by \(\Gamma=(D\Phi)^{-1}\) and its derivatives, gives the lower
bound.
\end{proof}

\subsection{Examples of adapted coverings}
\label{subsec:adapted-covering-examples}

We first record the constant rational case, then describe its stability under
controlled changes of coordinates and an explicit nonconstant resonant family.

\begin{example}[Constant rational directions]
\label{ex:constant-rational-directions}
Let \(a\in\mathbb S^1\) be constant, set \(b=a^\perp\), and suppose that
\(b=\pm\ell/|\ell|\) for a primitive
\(\ell\in\mathbb Z^2\setminus\{0\}\).  With \(L:=|\ell|\), consider
\(\widehat{\T}^2=(\R/2\pi L\Z)^2\) and define
\[
  \Phi(y,s)=ya+sb
  \quad\mod 2\pi\mathbb Z^2.
\]
This is a finite covering of degree \(L^2\), and
\(\partial_s\Phi=b=n\circ\Phi\).  Thus \(q\equiv1\), so the
controlled adapted-covering hypothesis holds.  For the coordinate directions,
one may take \(L=1\) and \(\Phi\) to be the identity.

These fields are resonant.  Indeed, if \(\psi\) is a nonzero mean-zero
\(2\pi\)-periodic function, then
\(e(x):=\psi(\ell^\perp\cdot x)b\) is periodic, divergence-free, mean-zero,
and orthogonal to \(m=a\).  Hence
\(\mathcal K_m\neq\{0\}\).
\end{example}

The construction is stable under controlled changes of coordinates.  Let
\(\Phi_0\) be the preceding rational cover and let
\(\Theta:\T^2\to\T^2\) be a \(C^3\)-diffeomorphism.  Define
\[
  \Phi:=\Theta\circ\Phi_0,
  \qquad
  n(\Theta(x))
  :=
  \frac{D\Theta(x)b}{|D\Theta(x)b|},
  \qquad
  m:=-n^\perp.
\]
Then
\(\partial_s\Phi=q\,n\circ\Phi\), where
\(q(y,s):=|D\Theta(\Phi_0(y,s))b|>0\).  Hence \(m\) admits a
controlled adapted covering, with constants determined by the covering degree
and suitable \(C^3\)-bounds for \(\Theta\) and \(\Theta^{-1}\).
This gives a stable class of adapted perturbations, but not every small
perturbation of a rational direction is covered: without an adapted fibration,
the transverse orbits need not close on a finite cover.

\begin{example}[An explicit sinusoidal perturbation]
\label{ex:explicit-sinusoidal-perturbation}
For \(|\varepsilon|\le\frac12\), consider the volume-preserving
diffeomorphism
\[
  \Phi_\varepsilon(y,s)
  =
  \bigl(y+\varepsilon\sin s,s\bigr).
\]
Set
\[
  q_\varepsilon(x_2)
  :=
  \sqrt{1+\varepsilon^2\cos^2x_2},
  \qquad
  n_\varepsilon(x)
  :=
  \frac{(\varepsilon\cos x_2,1)}{q_\varepsilon(x_2)},
  \qquad
  m_\varepsilon(x)
  :=
  \frac{(1,-\varepsilon\cos x_2)}{q_\varepsilon(x_2)}.
\]
Then \(n_\varepsilon=m_\varepsilon^\perp\),
\(\det D\Phi_\varepsilon=1\), and
\(\partial_s\Phi_\varepsilon
=q_\varepsilon n_\varepsilon\circ\Phi_\varepsilon\).
Thus \(\Phi_\varepsilon\) is an adapted covering of degree one.  Direct
differentiation shows that its controlled-covering constants and
\(\|m_\varepsilon\|_{W^{2,\infty}}\) are bounded uniformly for
\(|\varepsilon|\le\frac12\).

The associated shear defect is given explicitly by
\[
  \mathcal A_{m_\varepsilon}(x)
  =
  -\frac{\varepsilon\sin x_2}
  {1+\varepsilon^2\cos^2x_2}\,(1,0),
  \qquad
  \|\mathcal A_{m_\varepsilon}\|_{L^\infty}
  \le
  |\varepsilon|.
\]
The family is genuinely resonant.  For any nonzero mean-zero
\(2\pi\)-periodic function \(\psi\), define
\[
  \xi:=x_1-\varepsilon\sin x_2,
  \qquad
  e_\varepsilon(x)
  :=
  \psi(\xi)(\varepsilon\cos x_2,1).
\]
A direct calculation gives
\(\nabla\cdot e_\varepsilon=0\) and
\(e_\varepsilon\cdot m_\varepsilon=0\), while integration first in
\(x_1\) shows that \(e_\varepsilon\) has zero mean.  Therefore
\(e_\varepsilon\in\mathcal K_{m_\varepsilon}\setminus\{0\}\).

Consequently, \(m_\varepsilon\) lies in the regular resonant regime with
uniformly controlled adapted geometry and a shear defect of order
\(O(|\varepsilon|)\).  For fixed viscosity and reference-flow bounds, the
shear-defect hypothesis of Theorem~\ref{thm:periodic-conv} therefore holds
whenever \(|\varepsilon|\) is sufficiently small.
\end{example}

For comparison, let \(m=a\) be constant and suppose that
\(n=a^\perp\) is not parallel to any nonzero lattice vector.  The \(n\)-orbits
are then dense, so no adapted finite covering of the preceding type exists.
The observation is nevertheless kernel-free, as shown in
Section~\ref{sec:kernel-free}.  The associated compact-observability constants
may encode the small divisors \(k\cdot n\), but the kernel-free argument avoids
their explicit inversion.

\section{Logarithmic nonlinear compatibility in the regular resonant regime}
\label{sec:energy-estimates}

This section establishes the nonlinear estimate underlying the regular resonant mechanism. We assume that \(m\) admits a controlled adapted covering and hence, by Lemma~\ref{lem:adapted-covering-implies-resonance}, is resonant. No smallness of the shear defect is imposed at this stage.

After expanding \(b(e,u,e)\) in the moving frame, every non-defect term contains
the observed scalar \(r=e\cdot m\).  These terms are pulled back to the adapted
cover and estimated by weighted logarithmic trilinear inequalities.  The sole
term without a factor of \(r\) is governed by \(\mathcal A_m\).  Its viscous
absorbability is imposed later, in Section~\ref{sec:conv}.

\subsection{Logarithmic trilinear estimates}

For \(H\in H^1(\widehat{\T}^2)\), define the logarithmic factor by
\begin{equation}
\label{eq:log-factor}
  \mathcal L(H)
  :=
  \begin{cases}
  \displaystyle
  1+\log\left(
    1+
    \frac{\|H\|_{H^1(\widehat{\T}^2)}^2}
         {|H|_{L^2(\widehat{\T}^2)}^2}
  \right),
  & H\not\equiv0,\\[2ex]
  1,
  & H\equiv0.
  \end{cases}
\end{equation}
We first extend the logarithmic trilinear estimate used by
Farhat--Lunasin--Titi from mean-zero functions to arbitrary \(H^1\)-functions
on a flat torus.

\begin{lemma}[Logarithmic trilinear estimate]
\label{lem:Titi-FLT-core}
For every flat two-dimensional torus \(\widehat{\T}^2\), there exists
\(C_{\log}>0\) such that, for all
\(F,G,H\in H^1(\widehat{\T}^2)\), one has
\begin{equation}
\label{eq:Titi-FLT-core}
  \left|
    \int_{\widehat{\T}^2}
      F\,\partial_jG\,H\,dY
  \right|
  \le
  C_{\log}
  \|F\|_{H^1}
  \|G\|_{H^1}
  |H|_{L^2}\,
  \mathcal L(H)^{1/2},
  \qquad
  j=1,2.
\end{equation}
\end{lemma}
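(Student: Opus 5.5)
The plan is to prove \eqref{eq:Titi-FLT-core} by a Littlewood--Paley (Fourier) splitting of \(H\), following the Brezis--Gallouet/Titi argument behind the FLT estimate. The only change from the classical case is that we bound \(\|F\|_{L^\infty}\)-type quantities by full \(H^1\)-norms, so the zero Fourier mode is allowed. Write \(\widehat{\T}^2=(\R/L_y\Z)\times(\R/L_s\Z)\) with dual lattice \(\Lambda^*\), and expand \(H=\sum_{k\in\Lambda^*}\widehat H_k e^{ik\cdot Y}\). For a cutoff \(N\ge1\) still to be chosen, split \(H=H_{\le N}+H_{>N}\) according to \(|k|\le N\) or \(|k|>N\).

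\textbf{High modes.} By Hölder and the two-dimensional Ladyzhenskaya inequality (valid for nonmean-zero functions with the full \(H^1\)-norm),
\[
  \left|\int F\,\partial_jG\,H_{>N}\,dY\right|
  \le \|F\|_{L^4}\|\partial_jG\|_{L^2}\|H_{>N}\|_{L^4}
  \le C\|F\|_{H^1}\|G\|_{H^1}|H_{>N}|^{1/2}\|H_{>N}\|_{H^1}^{1/2}.
\]
Since \(|H_{>N}|\le N^{-1}\|H\|_{H^1}\), this is at most \(C\|F\|_{H^1}\|G\|_{H^1}N^{-1/2}\|H\|_{H^1}\).

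\textbf{Low modes.} Estimate \(|\int F\partial_jG H_{\le N}|\le|F|\,|\partial_jG|\,\|H_{\le N}\|_{L^\infty}\), and bound the sup norm by Cauchy--Schwarz on the Fourier coefficients:
\[
  \|H_{\le N}\|_{L^\infty}\le\sum_{|k|\le N}|\widehat H_k|
  \le\Bigl(\sum_{|k|\le N}(1+|k|^2)^{-1}\Bigr)^{1/2}\|H\|_{H^1}
  \le C(1+\log N)^{1/2}\|H\|_{H^1}.
\]
The lattice sum over the flat torus is where \(C_{\log}\) picks up its dependence on \(L_y,L_s\); counting lattice points in annuli gives the logarithmic growth.

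This low-mode bound yields \(\|H\|_{H^1}\) rather than \(|H|\), so the split as stated gives the wrong homogeneity, and correcting this is the main obstacle. The fix is to put the logarithmic gain on the factor we want to keep in \(L^2\). I would expect the intended FLT form to read \(C\|F\|_{H^1}\|G\|_{H^1}|H|\,\mathcal L(H)^{1/2}\) with \(\mathcal L\) built from \(\|H\|_{H^1}/|H|\) only when \(H\) itself carries the \(H^1\) control. On that reading, bound the low part by \(C|F|\,\|G\|_{H^1}(1+\log N)^{1/2}\cdot N|H|\)? This loses a power of \(N\) and fails. So I would instead follow Titi's original route: integrate by parts to move \(\partial_j\) onto \(F\) and \(H\), use the Brezis--Gallouet \(L^\infty\) bound on the \(H^1\) factors, and choose \(N^2=1+\|H\|_{H^1}^2/|H|^2\). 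Balancing then makes both pieces \(\lesssim\|F\|_{H^1}\|G\|_{H^1}|H|\mathcal L(H)^{1/2}\).

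If this homogeneity check shows that the statement needs one of \(F,G\) in \(H^2\) or \(L^\infty\), I would revisit it. My first attempt would be to verify the exact form used in \cite{FLT2016} and mimic its Fourier proof, replacing mean-zero Poincaré by full \(H^1\)-norms throughout and treating \(H\equiv0\) trivially.
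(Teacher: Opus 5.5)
Your proposal does not yet contain a proof. The one estimate you carry out puts the frequency cutoff on \(H\), and no choice of \(N\) can rescue it. The low part \(\|H_{\le N}\|_{L^\infty}\) is bounded either by \(C(1+\log N)^{1/2}\|H\|_{H^1}\), which has the wrong homogeneity, or by \(CN|H|\), which loses a power of \(N\). You noticed this yourself.

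The repair you then sketch also fails. Integrating by parts gives \(-\int \partial_jF\,G\,H\,dY-\int FG\,\partial_jH\,dY\). The first term has the same structure as the original, so nothing is gained. The second carries \(\partial_jH\), i.e.\ the factor \(\|H\|_{H^1}\) that may appear only inside the logarithm. The Brezis--Gallouet inequality controls \(\|F\|_{L^\infty}\) through \(\|F\|_{H^2}\), which is not available for \(F,G\in H^1\). It would in any case put a logarithm of \(F\), not of \(H\), into the bound. The statement does not need \(F\) or \(G\) in \(H^2\) or \(L^\infty\); it holds as stated, and your closing plan to ``mimic'' the FLT proof leaves the actual argument unspecified.

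The missing idea is that \(H\) must stay in \(L^2\), so the frequency cutoff belongs on \(F\), not on \(H\). Estimate \(|\int F\,\partial_jG\,H\,dY|\le|\partial_jG|_{L^2}\,\|FH\|_{L^2}\) and split \(F=F_{\le N}+F_{>N}\).

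\begin{itemize}
\item Your own lattice-sum computation, applied to \(F\), gives \(\|F_{\le N}H\|_{L^2}\le C(1+\log N)^{1/2}\|F\|_{H^1}|H|\).
\item Ladyzhenskaya together with \(|F_{>N}|\le N^{-1}\|F\|_{H^1}\) gives \(\|F_{>N}H\|_{L^2}\le CN^{-1/2}\|F\|_{H^1}|H|^{1/2}\|H\|_{H^1}^{1/2}\).
\end{itemize}

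Taking \(N=\|H\|_{H^1}/|H|\ge 1\) balances the two pieces. Since \(1+\log N\le\mathcal L(H)\), this yields the claimed estimate with full \(H^1\)-norms and no mean-zero assumption. An equivalent route is H\"older with \(\|F\|_{L^p}\le C\sqrt{p}\,\|F\|_{H^1}\) and Gagliardo--Nirenberg for \(H\) in \(L^{2p/(p-2)}\), optimized at \(p\sim\log(\|H\|_{H^1}/|H|)\).

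The paper does not reprove the estimate at all. It writes each function as its mean plus a zero-mean part. Using \(\int\partial_jG_0\,dY=0\), it reduces the integral to \(\int F_0\,\partial_jG_0\,H_0\,dY\) plus two mean terms, which Cauchy--Schwarz handles. It applies the mean-zero FLT estimate to the main term. It then converts \(|H_0|\,[1+\log(1+C\|H_0\|_{H^1}^2/|H_0|^2)]^{1/2}\) into \(|H|\,\mathcal L(H)^{1/2}\), using that \(x\mapsto x^2[1+\log(1+A^2/x^2)]\) is increasing. That reduction is shorter given the cited result, while the direct splitting on \(F\) is self-contained.
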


\begin{proof}
After a fixed linear identification, it suffices to work on a square torus;
the resulting constant depends only on the flat geometry.  Write
\(F=F_0+\bar F\), \(G=G_0+\bar G\), and
\(H=H_0+\bar H\), where the subscript \(0\) denotes the zero-mean
part.  Since \(\partial_jG=\partial_jG_0\) and
\(\int_{\widehat{\T}^2}\partial_jG_0\,dY=0\), the integral in
\eqref{eq:Titi-FLT-core} equals
\[
  \int_{\widehat{\T}^2}F_0\,\partial_jG_0\,H_0\,dY
  +
  \bar F
  \int_{\widehat{\T}^2}\partial_jG_0\,H_0\,dY
  +
  \bar H
  \int_{\widehat{\T}^2}F_0\,\partial_jG_0\,dY.
\]

The mean-zero FLT estimate \cite{FLT2016} bounds the first term by
\[
  C\|F_0\|_{H^1}\|G_0\|_{H^1}|H_0|_{L^2}
  \left[
    1+\log\left(
      1+C\frac{\|H_0\|_{H^1}^2}{|H_0|_{L^2}^2}
    \right)
  \right]^{1/2},
\]
with the expression interpreted as zero when \(H_0\equiv0\).
The zero-mean projection is bounded on \(L^2\) and \(H^1\), and the function
\(x\mapsto x^2[1+\log(1+A^2/x^2)]\) is increasing for \(x>0\).
Consequently, this term is bounded by the right-hand side of
\eqref{eq:Titi-FLT-core}.  Cauchy--Schwarz gives the same bound for the
two mean terms, since \(\mathcal L(H)\ge1\).
\end{proof}

The moving-frame expansion also produces bounded coefficients and terms without
a derivative.  The form needed below is recorded next.

\begin{lemma}[Weighted logarithmic trilinear estimates]
\label{lem:weighted-log-trilinear}
Let \(A\in W^{1,\infty}(\widehat{\T}^2)\).  There exists a constant
\(C_A\), depending only on the flat geometry and
\(\|A\|_{W^{1,\infty}}\), such that, for all
\(F,G,H\in H^1(\widehat{\T}^2)\), one has
\[
  \left|
    \int_{\widehat{\T}^2}
      A\,F\,\partial_jG\,H\,dY
  \right|
  \le
  C_A
  \|F\|_{H^1}
  \|G\|_{H^1}
  |H|_{L^2}\,
  \mathcal L(H)^{1/2},
  \qquad
  j=1,2,
\]
and
\[
  \left|
    \int_{\widehat{\T}^2}AFGH\,dY
  \right|
  \le
  C_A
  \|F\|_{H^1}
  \|G\|_{H^1}
  |H|_{L^2}\,
  \mathcal L(H)^{1/2}.
\]
\end{lemma}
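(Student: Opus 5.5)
For the first estimate, the plan is to absorb the weight \(A\) into \(F\) and apply Lemma~\ref{lem:Titi-FLT-core} directly. Set \(\widetilde F:=AF\). Since \(A\in W^{1,\infty}\), the product rule \(\partial_i(AF)=(\partial_iA)F+A\,\partial_iF\) gives \(\widetilde F\in H^1(\widehat{\T}^2)\) with
\[
  \|\widetilde F\|_{H^1}\le C\|A\|_{W^{1,\infty}}\|F\|_{H^1}.
\]
On a flat torus, \(W^{1,\infty}\) functions are Lipschitz, so this product rule is valid distributionally. Lemma~\ref{lem:Titi-FLT-core} applied to the triple \((\widetilde F,G,H)\) then yields the first inequality with \(C_A=C\,C_{\log}\|A\|_{W^{1,\infty}}\). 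The weight could equally be moved onto \(G\), but placing it on \(F\) avoids any derivative landing on \(A\) inside the integral.

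The second, derivative-free estimate does not need the logarithm at all. By Hölder's inequality,
\[
  \Bigl|\int AFGH\,dY\Bigr|\le\|A\|_{L^\infty}\|F\|_{L^4}\|G\|_{L^4}|H|_{L^2}.
\]
The two-dimensional Sobolev embedding \(H^1(\widehat{\T}^2)\hookrightarrow L^4(\widehat{\T}^2)\) holds for non-mean-zero functions as well, using the full \(H^1\)-norm, so the right-hand side is bounded by \(C\|A\|_{L^\infty}\|F\|_{H^1}\|G\|_{H^1}|H|_{L^2}\). The factor \(\mathcal L(H)^{1/2}\ge1\) can then be inserted for free. The embedding constant depends only on the flat geometry, and so does the Ladyzhenskaya-type constant after a linear identification with a square torus.

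There is no real obstacle here. The only points needing care are that every constant depends solely on the torus periods and on \(\|A\|_{W^{1,\infty}}\), and that the \(H^1\)-norm in Lemma~\ref{lem:Titi-FLT-core} is the full norm, which the product estimate for \(AF\) provides.
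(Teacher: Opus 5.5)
Your proposal is correct and matches the paper's proof: you absorb the weight into \(AF\), use the multiplier bound \(\|AF\|_{H^1}\le C\|A\|_{W^{1,\infty}}\|F\|_{H^1}\), and apply Lemma~\ref{lem:Titi-FLT-core}. For the derivative-free term you use H\"older's inequality, the embedding \(H^1\hookrightarrow L^4\), and \(\mathcal L(H)\ge1\), exactly as the paper does.
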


\begin{proof}
The first estimate follows from Lemma~\ref{lem:Titi-FLT-core}, applied with
\(AF\) in place of \(F\), and the multiplier bound
\(\|AF\|_{H^1}\le
C\|A\|_{W^{1,\infty}}\|F\|_{H^1}\).
For the second, H\"older's inequality and
\(H^1(\widehat{\T}^2)\hookrightarrow L^4(\widehat{\T}^2)\) give
\[
  \left|
    \int_{\widehat{\T}^2}AFGH\,dY
  \right|
  \le
  C\|A\|_{L^\infty}
  \|F\|_{H^1}
  \|G\|_{H^1}
  |H|_{L^2}.
\]
The result follows because \(\mathcal L(H)\ge1\).
\end{proof}

\subsection{Expansion of the trilinear term in the moving frame}

We now isolate the part of the Navier--Stokes nonlinearity that is invisible to
the scalar observation.  The moving-frame expansion shows that the only term
without a factor of \(r=e\cdot m\) is governed by the shear defect
\(\mathcal A_m\).

\begin{lemma}[Moving-frame expansion of the trilinear term]
\label{lem:moving-frame-trilinear-expansion}
Let \(m\in W^{1,\infty}(\T^2;\R^2)\) be a unit vector field, and let
\(e,u\) be smooth periodic divergence-free vector fields.  Retain
\(r=e\cdot m\), and set
\[
  w:=e\cdot n,
  \qquad
  U:=u\cdot m,
  \qquad
  V:=u\cdot n.
\]
Then \(e=rm+wn\), \(u=Um+Vn\), and
\begin{align}
  b(e,u,e)
  &=
  -\int_{\T^2}u\cdot\mathcal A_m\,w^2\,dx
  \notag\\
  &\quad
  +
  \int_{\T^2}
    r^2\bigl(D_mU-a_mV\bigr)\,dx
  \notag\\
  &\quad
  +
  \int_{\T^2}
    rw\bigl(D_nU-D_mV+a_mU-a_nV\bigr)\,dx
  \notag\\
  &\quad
  -
  2\int_{\T^2}rV\,D_mw\,dx.
  \label{eq:moving-frame-trilinear-expansion}
\end{align}
The identity extends to \(e,u\in V\) by density.
\end{lemma}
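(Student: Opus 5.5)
The plan is a direct computation in the moving frame, followed by a single integration by parts whose purpose is to move the derivative off the unobserved component \(w\) wherever it is not multiplied by \(r\).

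First, expand the transport term. Since \(e=rm+wn\), one has \(e\cdot\nabla=rD_m+wD_n\) acting componentwise. Applying this to \(u=Um+Vn\) and using \eqref{eq:frame-directional-connection} gives
\[
  D_mu=(D_mU-a_mV)m+(D_mV+a_mU)n,
  \qquad
  D_nu=(D_nU-a_nV)m+(D_nV+a_nU)n.
\]
Dotting \((e\cdot\nabla)u\) with \(e=rm+wn\) then produces
\[
  b(e,u,e)=\int r^2(D_mU-a_mV)+rw\bigl(D_mV+a_mU+D_nU-a_nV\bigr)+w^2(D_nV+a_nU)\,dx .
\]
The \(r^2\) term already matches \eqref{eq:moving-frame-trilinear-expansion}. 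What remains is to rewrite the \(w^2\) term, and in doing so to convert \(rw\,D_mV\) into \(-rwD_mV-2rVD_mw\).

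Second, use incompressibility of \(e\) and \(u\). From \(\nabla\cdot(rm+wn)=0\) we get
\[
  D_mr+(\nabla\cdot m)r+D_nw+(\nabla\cdot n)w=0.
\]
Note that \(\nabla\cdot m=D_mm\cdot m+D_nm\cdot n=a_n\) and \(\nabla\cdot n=D_mn\cdot m+D_nn\cdot n=-a_m\). Now integrate the \(w^2D_nV\) term by parts:
\[
  \int w^2D_nV=-\int V\,\nabla\cdot(w^2n)=-\int V\bigl(2wD_nw+(\nabla\cdot n)w^2\bigr).
\]
Substituting \(D_nw=-D_mr-a_nr-(\nabla\cdot n)w\) gives
\[
  \int w^2D_nV=\int(\nabla\cdot n)Vw^2+2\int Vw\,D_mr+2\int a_n rVw .
\]
Combined with the term \(a_nUw^2\), the \(w^2\) contribution becomes \(\int(a_nU+(\nabla\cdot n)V)w^2\), which equals \(-\int u\cdot\mathcal A_m\,w^2\) by \eqref{eq:u-dot-shear-defect}.

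Third, handle the term \(2\int VwD_mr\) by integrating by parts along \(m\):
\[
  2\int VwD_mr=-2\int r\bigl(wD_mV+VD_mw+a_nVw\bigr).
\]
Adding \(2\int a_nrVw\) cancels the \(a_n\) term exactly. The result is \(-2\int rwD_mV-2\int rVD_mw\). Collecting the \(rw\) coefficients gives \(D_nU-D_mV+a_mU-a_nV\), as claimed.

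The main obstacle is the sign and connection-coefficient bookkeeping, in particular checking that \(\nabla\cdot m=a_n\) and that the \(a_n\) terms cancel. If a discrepancy appears, I would test the identity on a constant \(m\) and on Example~\ref{ex:explicit-sinusoidal-perturbation}.

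Finally, the extension to \(e,u\in V\) follows by density. Every term is a trilinear integral of the form (bounded coefficient)\(\times F\,G\,\partial H\) or \(F\,G\,H\), with \(F,G,H\in H^1\). Since \(m\in W^{1,\infty}\), the components \(r,w,U,V\) lie in \(H^1\), so all these integrals are continuous on \(H^1\) by \(H^1\hookrightarrow L^4\). The left side \(b(e,u,e)\) is continuous on \(V\) as well. Approximating by fields from \(\mathcal V\) then extends the identity.
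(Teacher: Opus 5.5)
Your proposal is correct and follows the paper's proof essentially step for step. It uses the same frame expansion of the integrand and the same integration by parts of \(w^2D_nV\) via incompressibility of \(e\) (only \(e\)'s incompressibility is actually needed). It also uses the same second integration by parts along \(m\), in which the \(rwV\,\nabla\cdot m\) terms cancel; you just make \(\nabla\cdot m=a_n\) explicit where the paper keeps it symbolic. The density extension is likewise the same.
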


\begin{proof}
The frame identities \eqref{eq:frame-directional-connection} give
\[
  D_m u
  =
  (D_mU-a_mV)m+(D_mV+a_mU)n,
  \qquad
  D_n u
  =
  (D_nU-a_nV)m+(D_nV+a_nU)n.
\]
Hence the integrand initially expands as
\[
  (e\cdot\nabla u)\cdot e
  =
  r^2(D_mU-a_mV)
  +
  rw(D_nU-a_nV+D_mV+a_mU)
  +
  w^2(D_nV+a_nU).
\]
Only the last term contains no factor of \(r\).

To rewrite it, incompressibility of \(e=rm+wn\) gives
\[
  D_nw
  =
  -D_mr-(\nabla\cdot m)r-(\nabla\cdot n)w.
\]
Periodic integration by parts then yields
\begin{align*}
  \int_{\T^2}w^2D_nV\,dx
  &=
  -2\int_{\T^2}VwD_nw\,dx
  -
  \int_{\T^2}Vw^2\nabla\cdot n\,dx
  \\
  &=
  \int_{\T^2}Vw^2\nabla\cdot n\,dx
  +
  2\int_{\T^2}VwD_mr\,dx
  +
  2\int_{\T^2}Vwr\nabla\cdot m\,dx
  \\
  &=
  \int_{\T^2}Vw^2\nabla\cdot n\,dx
  -
  2\int_{\T^2}rwD_mV\,dx
  -
  2\int_{\T^2}rV D_mw\,dx.
\end{align*}
In the last step, the two terms containing
\(rwV\nabla\cdot m\) cancel.  Since
\(\mathcal A_m=-a_nm-(\nabla\cdot n)n\), it follows that
\[
  \int_{\T^2}w^2(D_nV+a_nU)\,dx
  =
  -\int_{\T^2}u\cdot\mathcal A_m\,w^2\,dx
  -
  2\int_{\T^2}rwD_mV\,dx
  -
  2\int_{\T^2}rV D_mw\,dx.
\]
Substitution into the initial expansion proves
\eqref{eq:moving-frame-trilinear-expansion}.

For \(e,u\in V\), the result follows by approximation with smooth solenoidal
fields, since every term in the identity is continuous under \(H^1\)-convergence
by the two-dimensional embedding \(H^1(\T^2)\hookrightarrow L^4(\T^2)\).
\end{proof}

\subsection{Adapted-coordinate form of the non-defect terms}
\label{subsec:adapted-coordinate-nondefect}

Assume that \(m\) admits a controlled adapted covering, and retain the pullback
notation of \S\ref{subsec:adapted-coverings}.  The key point is that, after
passing to adapted coordinates, every non-defect term in
\eqref{eq:moving-frame-trilinear-expansion} retains an undifferentiated factor
of the observed scalar \(r=e\cdot m\).

\begin{lemma}[Adapted form of the non-defect terms]
\label{lem:adapted-admissible-terms}
Set
\[
  R:=r\circ\Phi,
  \qquad
  \widehat w:=w\circ\Phi,
  \qquad
  \widehat U:=U\circ\Phi,
  \qquad
  \widehat V:=V\circ\Phi.
\]
Then the three non-defect terms in
\eqref{eq:moving-frame-trilinear-expansion} pull back to finite sums of the
forms
\begin{equation}
\label{eq:adapted-derived-term}
  \int_{\widehat{\mathbb T}^2}
    A\,F\,\partial_jG\,R\,dY,
  \qquad
  j=1,2,
\end{equation}
and
\begin{equation}
\label{eq:adapted-lower-term}
  \int_{\widehat{\mathbb T}^2}
    AFG R\,dY.
\end{equation}
Here \(F\) and \(G\) are chosen from
\(\{R,\widehat w,\widehat U,\widehat V\}\).  Each coefficient \(A\) satisfies the \(W^{1,\infty}\)-bound required in
Lemma~\ref{lem:weighted-log-trilinear}.  These bounds depend only on the
covering degree, \(\mathfrak C_\Phi\), and
\(\|m\|_{W^{2,\infty}}\).
\end{lemma}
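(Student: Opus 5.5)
The plan is to pull each of the three non-defect integrals in \eqref{eq:moving-frame-trilinear-expansion} back to \(\widehat{\T}^2\) and then check that what results has the stated form.

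\emph{Pullback.} By the area formula, for any integrable scalar \(g\) on \(\T^2\),
\[
  \int_{\T^2}g\,dx=\frac{1}{d}\int_{\widehat{\T}^2}(g\circ\Phi)\,J\,dY,
\]
where \(d\) is the covering degree. The Jacobian \(J\) therefore enters as a multiplicative weight. Its \(W^{1,\infty}\)-norm is controlled by \(\mathfrak C_\Phi\), since \(J=|\det D\Phi|\), \(D\Phi\in W^{2,\infty}\), and \(J\) is bounded away from zero.

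\emph{Directional derivatives.} Next I will express every directional derivative through Lemma~\ref{lem:directional-derivatives-adapted-cover}:
\[
  D_n\mapsto q^{-1}\partial_s,
  \qquad
  D_m\mapsto\beta_y\partial_y+\beta_s\partial_s.
\]
The frame data \(a_m\), \(a_n\) and \(\nabla\cdot m\) involve one derivative of \(m\), and they are composed with \(\Phi\). Since \(m\in W^{2,\infty}\) and \(\Phi\in W^{3,\infty}\), the composites \(a_m\circ\Phi\) and \(a_n\circ\Phi\) lie in \(W^{1,\infty}\), with norms bounded by \(C_\Phi\|m\|_{W^{2,\infty}}\). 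The coefficients \(\beta_y,\beta_s\) are bounded in \(W^{2,\infty}\) by the estimate recorded after Definition~\ref{def:controlled-adapted-covering}, and \(q^{-1}\in W^{2,\infty}\).

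\emph{Sorting the terms.}
\begin{itemize}
\item \(r^2D_mU\) becomes \(J\beta_j\,R\,\partial_j\widehat U\,R\). This is of type \eqref{eq:adapted-derived-term} with \(F=R\), \(G=\widehat U\), \(A=J\beta_j/d\).
\item \(-r^2a_mV\) becomes \(A\,R\,\widehat V\,R\) with \(A=-J(a_m\circ\Phi)/d\). This is of type \eqref{eq:adapted-lower-term}.
\item \(rw(D_nU-D_mV)\) gives \(Jq^{-1}\,\widehat w\,\partial_s\widehat U\,R\) and \(-J\beta_j\,\widehat w\,\partial_j\widehat V\,R\), both derived terms with \(F=\widehat w\).
\item \(rw(a_mU-a_nV)\) gives lower-order terms \(AFGR\) with \(F=\widehat w\) and \(G\in\{\widehat U,\widehat V\}\).
\item \(-2rVD_mw\) gives \(-2J\beta_j\,\widehat V\,\partial_j\widehat w\,R\), a derived term with \(F=\widehat V\) and \(G=\widehat w\).
\end{itemize}
In every case the undifferentiated factor \(R\) sits in the slot \(H\) of Lemma~\ref{lem:weighted-log-trilinear}, and \(F,G\) belong to the stated set.

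\emph{Regularity of the fields.} \(R,\widehat w,\widehat U,\widehat V\) are in \(H^1(\widehat{\T}^2)\) by Lemma~\ref{lem:cover-norm-comparison}, because \(r=e\cdot m\) and the other components are products of \(H^1\) fields with the \(W^{1,\infty}\) field \(m\) or \(n\). This is needed for the later application of Lemma~\ref{lem:weighted-log-trilinear}, although the lemma at hand only asserts the algebraic form.

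\emph{Main obstacle.} The one delicate step is regularity bookkeeping for the coefficients. Each \(A\) must be in \(W^{1,\infty}\), not merely \(L^\infty\). This requires \(a_m,a_n\in W^{1,\infty}\), which is exactly why \(m\in W^{2,\infty}\) is assumed, together with the chain rule for \(W^{1,\infty}\) functions composed with the Lipschitz, locally bi-Lipschitz map \(\Phi\). I will verify this via \(\nabla(g\circ\Phi)=(D\Phi)^{\top}(\nabla g)\circ\Phi\), and obtain the product bounds from the algebra property of \(W^{1,\infty}\). All the resulting constants depend only on the degree \(d\), \(\mathfrak C_\Phi\) and \(\|m\|_{W^{2,\infty}}\).
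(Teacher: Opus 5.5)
Your proposal is correct and follows essentially the same route as the paper. Both use the area formula with the degree factor, the pullbacks \(D_n\mapsto q^{-1}\partial_s\) and \(D_m\mapsto\beta_y\partial_y+\beta_s\partial_s\) from Lemma~\ref{lem:directional-derivatives-adapted-cover}, term-by-term sorting with \(R\) kept undifferentiated, and \(W^{1,\infty}\) coefficient bounds assembled from \(J/d\), \(q^{-1}\), \(\beta_y,\beta_s\), \(a_m\circ\Phi\) and \(a_n\circ\Phi\). Your explicit sorting is more detailed than the paper's single worked example; the one small inaccuracy is that the bound on \(a_m\circ\Phi\) and \(a_n\circ\Phi\) grows polynomially (roughly quadratically) in \(\|m\|_{W^{2,\infty}}\), not linearly, which the statement's "depend only on" still allows.
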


\begin{proof}
Let \(d_\Phi\) denote the degree of the covering.  The area formula gives
\[
  \int_{\mathbb T^2}f(x)\,dx
  =
  \frac{1}{d_\Phi}
  \int_{\widehat{\mathbb T}^2}
    f(\Phi(Y))J(Y)\,dY.
\]
By Lemma~\ref{lem:directional-derivatives-adapted-cover}, the pullbacks of
\(D_n\) and \(D_m\) are, respectively,
\(q^{-1}\partial_s\) and
\(\beta_y\partial_y+\beta_s\partial_s\).

For example, one of the mixed terms becomes
\[
  \int_{\mathbb T^2}rwD_nU\,dx
  =
  \frac{1}{d_\Phi}
  \int_{\widehat{\mathbb T}^2}
    \frac{J}{q}\,
    \widehat w\,\partial_s\widehat U\,R\,dY,
\]
which has the form \eqref{eq:adapted-derived-term}.  Terms involving \(D_m\)
split similarly into their \(\partial_y\)- and \(\partial_s\)-parts.  In every
case, one factor \(R\) remains undifferentiated.  The terms containing
\(a_m\) or \(a_n\) have no coordinate derivative and therefore take the form
\eqref{eq:adapted-lower-term}.

The coefficients are products of \(d_\Phi^{-1}J\), \(q^{-1}\),
\(\beta_y\), \(\beta_s\), and pullbacks of \(a_m,a_n\).  Their required
\(W^{1,\infty}\)-bounds follow from the controlled-covering assumptions and
\(m\in W^{2,\infty}\).
\end{proof}

\subsection{Logarithmic compatibility estimate}
\label{subsec:logarithmic-compatibility}

For \(r\in H^1(\T^2)\), define
\[
  \mathcal L_m(r)
  :=
  \begin{cases}
  \displaystyle
  1+\log\left(
    1+
    \frac{\|r\|_{H^1(\T^2)}^2}
         {|r|_{L^2(\T^2)}^2}
  \right),
  & r\not\equiv0,\\[2ex]
  1,
  & r\equiv0.
  \end{cases}
\]

\begin{lemma}[Logarithmic pullback comparison]
\label{lem:log-factor-pullback}
Let \(\Phi:\widehat{\T}^2\to\T^2\) be a controlled adapted covering and set
\(R:=r\circ\Phi\).  Then
\[
  \mathcal L(R)
  \le
  C_\Phi^{\log}\mathcal L_m(r),
\]
where \(C_\Phi^{\log}\) depends only on the covering degree and
\(\mathfrak C_\Phi\).
\end{lemma}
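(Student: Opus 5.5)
The plan is to reduce the claim to the two-sided norm comparison of Lemma~\ref{lem:cover-norm-comparison} for \(k=0,1\), together with an elementary inequality for the function \(t\mapsto 1+\log(1+t)\).

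First dispose of the trivial case.  If \(r\equiv0\), then \(R\equiv0\) and both factors equal one, so any \(C_\Phi^{\log}\ge1\) works.  Conversely, \(R\equiv0\) forces \(r\equiv0\), because the \(L^2\) lower bound in Lemma~\ref{lem:cover-norm-comparison} gives \(|r|\le C_\Phi|R|\).  So assume \(r\not\equiv0\).

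Applying Lemma~\ref{lem:cover-norm-comparison} with \(k=1\) and \(k=0\) gives
\[
  \|R\|_{H^1(\widehat\T^2)}\le C_\Phi\|r\|_{H^1(\T^2)},
  \qquad
  |R|_{L^2(\widehat\T^2)}\ge C_\Phi^{-1}|r|_{L^2(\T^2)}.
\]
Hence \(\|R\|_{H^1}^2/|R|_{L^2}^2\le K\,\|r\|_{H^1}^2/|r|_{L^2}^2\), where \(K:=C_\Phi^4\ge1\).

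It then remains to prove that for \(t\ge0\) and \(K\ge1\),
\[
  1+\log(1+Kt)\le 1+\log K+\log(1+t)\le(1+\log K)\bigl(1+\log(1+t)\bigr).
\]
The first inequality follows from \(1+Kt\le K(1+t)\).  The second holds because \(\log K\ge0\) and \(\log(1+t)\ge0\), so the product dominates the sum.  Applying this with \(t=\|r\|_{H^1}^2/|r|_{L^2}^2\) and using monotonicity of the logarithm yields \(\mathcal L(R)\le(1+4\log C_\Phi)\,\mathcal L_m(r)\).  We therefore set \(C_\Phi^{\log}:=1+4\log C_\Phi\).

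The constant in Lemma~\ref{lem:cover-norm-comparison} depends only on the covering degree and \(\mathfrak C_\Phi\), so the same is true of \(C_\Phi^{\log}\).

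The only point needing care is that the \(L^2\) lower bound is used in the denominator, and that it is a genuine two-sided comparison.  This rests on the lower bound for \(\sum_{Y\in\Phi^{-1}(x)} J(Y)^{-1}\) in the \(k=0\) case of Lemma~\ref{lem:cover-norm-comparison}, which follows from the upper bound on \(J\).  Otherwise the argument is routine.
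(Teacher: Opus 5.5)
Your proposal is correct and matches the paper's proof essentially line for line. The two-sided comparison of Lemma~\ref{lem:cover-norm-comparison} bounds the ratio \(\|R\|_{H^1}^2/|R|_{L^2}^2\) by \(C_\Phi^4\) times the corresponding ratio for \(r\), and the inequality \(1+\log(1+Kt)\le(1+\log K)\bigl(1+\log(1+t)\bigr)\) for \(K\ge1\) gives the same constant \(C_\Phi^{\log}=1+4\log C_\Phi\); your extra remarks (that \(R\equiv0\) forces \(r\equiv0\), and the justification via \(1+Kt\le K(1+t)\)) only make explicit what the paper leaves implicit.
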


\begin{proof}
The assertion is immediate if \(r\equiv0\).  
Otherwise,
after taking \(C_\Phi\ge1\),
Lemma~\ref{lem:cover-norm-comparison} gives
\[
  \frac{\|R\|_{H^1}^2}{|R|_{L^2}^2}
  \le
  C_\Phi^4
  \frac{\|r\|_{H^1}^2}{|r|_{L^2}^2}.
\]
Using
\[
  1+\log(1+CX)
  \le
  (1+\log C)\bigl(1+\log(1+X)\bigr),
  \qquad
  C\ge1,\quad X\ge0,
\]
we may take \(C_\Phi^{\log}=1+4\log C_\Phi\).
\end{proof}

\begin{theorem}[Logarithmic nonlinear compatibility]
\label{thm:logarithmic-nonlinear-compatibility}
Let \(m\) be a unit field in \(W^{2,\infty}(\T^2;\R^2)\) admitting a
controlled adapted covering.  Then there exists \(K_m>0\), depending only on the chosen covering and
\(\|m\|_{W^{2,\infty}}\), such that every \(u,e\in V\) satisfies
\begin{align}
  |b(e,u,e)|
  &\le
  K_m\lambda_1^{-1/2}
  \|u\|_{L^2}
  \|\mathcal A_m\|_{L^\infty}
  \|\nabla e\|_{L^2}^2
  \notag\\
  &\quad
  +
  K_m
  \|\nabla u\|_{L^2}
  \|\nabla e\|_{L^2}
  |r|_{L^2}\,
  \mathcal L_m(r)^{1/2}.
  \label{eq:log-compat-before-young}
\end{align}
Consequently, for every \(\varepsilon>0\), one has
\begin{align}
  |b(e,u,e)|
  &\le
  \left(
    K_m\lambda_1^{-1/2}
    \|u\|_{L^2}
    \|\mathcal A_m\|_{L^\infty}
    +
    \varepsilon
  \right)
  \|\nabla e\|_{L^2}^2
  \notag\\
  &\quad
  +
  \frac{K_m}{\varepsilon}
  \|\nabla u\|_{L^2}^2
  \mathcal L_m(r)
  |r|_{L^2}^2.
  \label{eq:log-compat-after-young}
\end{align}
\end{theorem}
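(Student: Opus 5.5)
We start from the moving-frame expansion of Lemma~\ref{lem:moving-frame-trilinear-expansion}, which writes \(b(e,u,e)\) as the defect integral \(-\int_{\T^2}u\cdot\mathcal A_m\,w^2\,dx\) plus three non-defect integrals. We bound the two groups separately and then add the bounds.

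\emph{Defect term.} By H\"older's inequality,
\[
  \Bigl|\int_{\T^2}u\cdot\mathcal A_m\,w^2\,dx\Bigr|
  \le
  \|\mathcal A_m\|_{L^\infty}\,\|u\|_{L^2}\,\|w\|_{L^4}^2 .
\]
Here \(w=e\cdot n\) with \(|n|=1\), so \(\|w\|_{L^4}\le\|e\|_{L^4}\). Ladyzhenskaya's inequality gives \(\|e\|_{L^4}^2\le C|e|\,\|\nabla e\|_{L^2}\), and Poincar\'e's inequality \eqref{eq:poincare} then gives
\[
  \|e\|_{L^4}^2\le C\lambda_1^{-1/2}\|\nabla e\|_{L^2}^2 .
\]
This is exactly the first term of \eqref{eq:log-compat-before-young}, with a universal constant. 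We use only the \(L^2\)-norm of \(u\) here, so no derivative of \(u\) enters the defect coefficient.

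\emph{Non-defect terms.} By Lemma~\ref{lem:adapted-admissible-terms}, each of the three remaining integrals pulls back to a finite sum of terms of the forms \eqref{eq:adapted-derived-term} and \eqref{eq:adapted-lower-term}. In every such term the undifferentiated factor \(R=r\circ\Phi\) sits in the slot \(H\), while \(F,G\in\{R,\widehat w,\widehat U,\widehat V\}\), and the coefficients \(A\) are controlled in \(W^{1,\infty}\). Lemma~\ref{lem:weighted-log-trilinear} with \(H=R\) bounds each term by
\[
  C\,\|F\|_{H^1}\|G\|_{H^1}\,|R|_{L^2}\,\mathcal L(R)^{1/2}.
\]
We then return to the base torus:
\begin{itemize}
  \item Lemma~\ref{lem:cover-norm-comparison} gives \(\|F\|_{H^1(\widehat\T^2)}\le C_\Phi\|F\circ\Phi^{-1}\|_{H^1}\) and \(|R|\le C_\Phi|r|\).
  \item Lemma~\ref{lem:log-factor-pullback} gives \(\mathcal L(R)\le C_\Phi^{\log}\mathcal L_m(r)\).
  \item Since \(m\in W^{2,\infty}\), the multiplier bound gives \(\|r\|_{H^1},\|w\|_{H^1}\le C_m\|e\|_{H^1}\le C_m\|\nabla e\|_{L^2}\), the last step by Poincar\'e. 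In the same way \(\|U\|_{H^1},\|V\|_{H^1}\le C_m\|\nabla u\|_{L^2}\).
\end{itemize}

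\emph{Main obstacle: the degree count.} We need every term to carry exactly one factor from \(u\) and one from \(e\) besides \(R\). The expansion \eqref{eq:moving-frame-trilinear-expansion} is bilinear in \(e\) and linear in \(u\). In each non-defect term one \(e\)-factor is the distinguished \(R\), so the pair \((F,G)\) consists of one \(e\)-component (\(R\) or \(\widehat w\)) and one \(u\)-component (\(\widehat U\) or \(\widehat V\)). Checking term by term:
\begin{itemize}
  \item \(r^2D_mU\): take \(F=R\), \(G=\widehat U\).
  \item \(rwD_nU\), \(rwD_mV\): take \(F=\widehat w\) and \(G\) equal to \(\widehat U\) or \(\widehat V\).
  \item \(rVD_mw\): take \(F=\widehat V\), \(G=\widehat w\).
  \item Terms with \(a_m\) or \(a_n\): they are of the form \eqref{eq:adapted-lower-term} with the same pairing.
\end{itemize}
The Lemma~\ref{lem:weighted-log-trilinear} bound is symmetric in \(F,G\), so the placement of the derivative does not matter. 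Each term is therefore bounded by \(C\|\nabla u\|_{L^2}\|\nabla e\|_{L^2}|r|\,\mathcal L_m(r)^{1/2}\). Summing the finitely many terms and absorbing all constants into \(K_m\) proves \eqref{eq:log-compat-before-young}.

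\emph{The \(\varepsilon\)-form.} Apply Young's inequality to the second term of \eqref{eq:log-compat-before-young}:
\[
  K_m\|\nabla u\|\,\|\nabla e\|\,|r|\,\mathcal L_m^{1/2}
  \le
  \varepsilon\|\nabla e\|^2
  +
  \frac{K_m^2}{4\varepsilon}\|\nabla u\|^2\,|r|^2\,\mathcal L_m .
\]
Enlarging \(K_m\) if necessary so that \(K_m\ge K_m^2/4\), i.e.\ redefining \(K_m\) as \(\max(K_m,K_m^2/4)\), gives \eqref{eq:log-compat-after-young}.
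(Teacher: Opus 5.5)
Your proposal is correct and follows essentially the same route as the paper: the moving-frame split into the defect integral and the $r$-containing terms, the pullback via Lemma~\ref{lem:adapted-admissible-terms}, the weighted logarithmic estimate with $H=R$, the return to the base torus via the norm- and logarithmic-comparison lemmas, Poincar\'e's inequality, and a final Young step with $K_m$ enlarged. The only differences are harmless. For the defect term you apply Ladyzhenskaya's inequality to the mean-zero field $e$ after using $|w|\le|e|$, which gives a universal constant; the paper instead applies it to $w$ and uses $\|w\|_{H^1}\le C_m\|e\|_{H^1}$. You also spell out the $e$/$u$ degree count that the paper leaves implicit.
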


\begin{proof}
We first argue for smooth solenoidal fields and then pass to \(V\) by density.
Lemma~\ref{lem:moving-frame-trilinear-expansion} decomposes the trilinear term
as
\[
  b(e,u,e)=I_{\rm def}+I_{\rm obs},
  \qquad
  I_{\rm def}
  :=
  -\int_{\T^2}u\cdot\mathcal A_m\,w^2\,dx,
\]
where \(I_{\rm obs}\) is the sum of the three terms containing the observed
factor \(r\).

For the defect term, H\"older's and Ladyzhenskaya's inequalities give
\[
  |I_{\rm def}|
  \le
  C
  \|\mathcal A_m\|_{L^\infty}
  \|u\|_{L^2}
  |w|_{L^2}
  \|w\|_{H^1}.
\]
Since \(w=e\cdot n\), we have
\[
  |w|_{L^2}\le |e|_{L^2},
  \qquad
  \|w\|_{H^1}
  \le
  C_m\|e\|_{H^1}.
\]
Poincare's inequality therefore yields
\[
  |I_{\rm def}|
  \le
  K_m\lambda_1^{-1/2}
  \|u\|_{L^2}
  \|\mathcal A_m\|_{L^\infty}
  \|\nabla e\|_{L^2}^2.
\]

For the observed terms, Lemma~\ref{lem:adapted-admissible-terms} and
Lemma~\ref{lem:weighted-log-trilinear} give
\[
  |I_{\rm obs}|
  \le
  C_{\Phi,m}
  \|e\|_{H^1(\T^2)}
  \|u\|_{H^1(\T^2)}
  |R|_{L^2(\widehat{\T}^2)}
  \mathcal L(R)^{1/2}.
\]
The norm comparison and logarithmic pullback lemmas imply
\[
  |R|_{L^2(\widehat{\T}^2)}
  \le
  C_\Phi |r|_{L^2(\T^2)},
  \qquad
  \mathcal L(R)^{1/2}
  \le
  (C_\Phi^{\log})^{1/2}
  \mathcal L_m(r)^{1/2}.
\]
Using Poincare's inequality for \(e,u\in H\), we obtain
\[
  |I_{\rm obs}|
  \le
  K_m
  \|\nabla u\|_{L^2}
  \|\nabla e\|_{L^2}
  |r|_{L^2}\,
  \mathcal L_m(r)^{1/2}.
\]
Combining the two bounds proves
\eqref{eq:log-compat-before-young}.  Young's inequality gives
\eqref{eq:log-compat-after-young}, after enlarging \(K_m\) if necessary.
\end{proof}

\begin{remark}[Dependence of the compatibility constant]
The constant \(K_m\) depends on the chosen adapted covering.  Inspection of the
proof shows that it may be chosen uniformly over any family for which the
covering degrees, the controlled-covering constants \(\mathfrak C_\Phi\), and
the norms \(\|m\|_{W^{2,\infty}}\) are uniformly bounded.  In particular, a
uniform choice is available for the family \(m_\varepsilon\) of
Example~\ref{ex:explicit-sinusoidal-perturbation} when
\(|\varepsilon|\le\frac12\).
\end{remark}

\begin{corollary}[Shear-defect absorbed logarithmic compatibility]
\label{cor:defect-absorbed-log-compat}
Assume the hypotheses of
Theorem~\ref{thm:logarithmic-nonlinear-compatibility}.  If
\begin{equation}
\label{eq:defect-absorption-condition}
  K_m\lambda_1^{-1/2}
  \|u\|_{L^2}
  \|\mathcal A_m\|_{L^\infty}
  \le
  \frac{\nu}{4},
\end{equation}
then, with \(r=e\cdot m\), one has
\begin{equation}
\label{eq:defect-absorbed-log-compat}
  |b(e,u,e)|
  \le
  \frac{\nu}{2}\|\nabla e\|_{L^2}^2
  +
  \frac{K_m}{\nu}
  \|\nabla u\|_{L^2}^2
  \mathcal L_m(r)|r|_{L^2}^2.
\end{equation}
\end{corollary}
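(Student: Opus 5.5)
The plan is to apply the Young form \eqref{eq:log-compat-after-young} of Theorem~\ref{thm:logarithmic-nonlinear-compatibility} with the specific choice \(\varepsilon=\nu/4\). This gives
\[
  |b(e,u,e)|
  \le
  \Bigl(
    K_m\lambda_1^{-1/2}\|u\|_{L^2}\|\mathcal A_m\|_{L^\infty}
    +\frac{\nu}{4}
  \Bigr)\|\nabla e\|_{L^2}^2
  +
  \frac{4K_m}{\nu}\|\nabla u\|_{L^2}^2\mathcal L_m(r)|r|_{L^2}^2 .
\]
Under the absorption condition \eqref{eq:defect-absorption-condition}, the coefficient of \(\|\nabla e\|_{L^2}^2\) is at most \(\nu/4+\nu/4=\nu/2\). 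This is exactly the viscous part of \eqref{eq:defect-absorbed-log-compat}.

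The only point requiring care is the constant in the second term: the Young step produces \(4K_m/\nu\) rather than \(K_m/\nu\). I would handle this in one of two equivalent ways. The first is to redo the Young step directly from \eqref{eq:log-compat-before-young}, writing
\[
  K_m\|\nabla u\|\,\|\nabla e\|\,|r|\,\mathcal L_m(r)^{1/2}
  \le
  \frac{\nu}{4}\|\nabla e\|^2
  +
  \frac{K_m^2}{\nu}\|\nabla u\|^2\mathcal L_m(r)|r|^2 .
\]
The second is to use the convention, already invoked at the end of the proof of Theorem~\ref{thm:logarithmic-nonlinear-compatibility}, that \(K_m\) may be enlarged, replacing \(K_m\) by \(\max(4K_m,K_m^2)\).

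Enlarging \(K_m\) could make the hypothesis \eqref{eq:defect-absorption-condition} more restrictive, so to stay consistent I would fix the enlarged constant once and for all before stating both the hypothesis and the conclusion. With the same symbol used in both places, the inequality \(K_m\lambda_1^{-1/2}\|u\|\|\mathcal A_m\|\le\nu/4\) implies the defect coefficient bound for any smaller constant. The argument is then a purely algebraic consequence of the theorem, and there is no genuine obstacle beyond this bookkeeping.
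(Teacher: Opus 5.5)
Your argument is the paper's own: apply the Young form \eqref{eq:log-compat-after-young} with \(\varepsilon=\nu/4\) and invoke \eqref{eq:defect-absorption-condition}. You are right that, read literally, this yields \(4K_m/\nu\) rather than \(K_m/\nu\). The stated constant therefore rests on the enlargement of \(K_m\) permitted in Theorem~\ref{thm:logarithmic-nonlinear-compatibility}, and you handle this consistently by fixing the enlarged constant in both the hypothesis and the conclusion. Note only that your first route is not an independent fix: it produces \(K_m^2/\nu\), so it too needs the enlargement unless \(K_m\le1\).
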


\begin{proof}
Apply \eqref{eq:log-compat-after-young} with
\(\varepsilon=\nu/4\) and use
\eqref{eq:defect-absorption-condition}.
\end{proof}

\begin{remark}[Pure transverse errors]
If \(r=e\cdot m\equiv0\), then \(e=wn\), and the moving-frame expansion reduces
to
\[
  b(e,u,e)
  =
  -\int_{\T^2}
    u\cdot\mathcal A_m\,|e|^2\,dx.
\]
Thus \(\mathcal A_m\) is precisely the obstruction to the pure-shear
cancellation that holds for constant projection directions.
\end{remark}

\section{Exact synchronization in the regular resonant regime}
\label{sec:conv}

We now combine the exact error identity with the logarithmic compatibility
estimate of Section~\ref{sec:energy-estimates}.  Throughout this section, the
time-independent unit field \(m\) admits a controlled adapted covering, and we
retain the notation \(e=v-u\) and \(r=e\cdot m\) introduced in
\S\ref{ss:error-identities}.  The synchronization theorem will additionally require the
shear defect to be viscously absorbable.

\subsection{A scalar logarithmic absorption lemma}

The following elementary inequality converts the logarithmic compatibility
bound into a large-gain estimate.

\begin{lemma}[Logarithmic absorption]
\label{lem:log-absorption}
Let \(A\ge0\), \(C_0>0\), \(\delta>0\), \(X\ge0\), and \(Z>0\).  Then
\begin{equation}
\label{eq:log-absorption}
  AZ
  \left(
    1+\log\left(1+C_0\frac{X}{Z}\right)
  \right)
  \le
  \delta X
  +
  AZ
  \left(
    1+\log\left(1+\frac{C_0A}{\delta}\right)
  \right).
\end{equation}
\end{lemma}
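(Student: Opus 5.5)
The plan is to reduce the inequality to a two-case comparison, according to whether the logarithm on the left is dominated by its value at the threshold \(X/Z=A/\delta\). If \(A=0\), both sides reduce to \(\delta X\ge0\) and \(0\), so the claim is trivial; assume \(A>0\). After subtracting the common term \(AZ\), it suffices to show
\[
  AZ\log\Bigl(1+C_0\frac{X}{Z}\Bigr)
  \le
  \delta X
  +
  AZ\log\Bigl(1+\frac{C_0A}{\delta}\Bigr).
\]

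\emph{Case 1: \(X/Z\le A/\delta\).} Since \(t\mapsto\log(1+C_0t)\) is increasing, we get \(\log(1+C_0X/Z)\le\log(1+C_0A/\delta)\). The inequality then holds even without the \(\delta X\) term.

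\emph{Case 2: \(X/Z> A/\delta\).} Here we want to absorb the whole logarithm into \(\delta X\). Write \(t=X/Z\) and use the elementary bound \(\log(1+C_0t)\le\log(1+C_0A/\delta)+\log(t\delta/A)\). This follows from
\[
  1+C_0t\le\Bigl(1+\frac{C_0A}{\delta}\Bigr)\frac{t\delta}{A},
\]
which holds because the right side equals \(t\delta/A+C_0t\) and \(t\delta/A>1\). It therefore remains to check \(AZ\log(t\delta/A)\le\delta X=\delta Zt\). Setting \(s=t\delta/A>1\), this becomes \(\log s\le s\), which is true for all \(s>0\).

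No step is a genuine obstacle. The only point needing care is the splitting identity in Case 2, namely factoring \(1+C_0t\) against \(1+C_0A/\delta\), together with the use of \(\log s\le s\). Combining the two cases and adding back \(AZ\) to both sides gives \eqref{eq:log-absorption}.
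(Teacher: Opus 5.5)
Your proof is correct. It establishes the same scalar fact as the paper, but by a different and derivative-free route.

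After dividing by \(AZ>0\) and writing \(t=C_0X/Z\), \(\alpha=C_0A/\delta\), both arguments amount to showing \(\log(1+t)-t/\alpha\le\log(1+\alpha)\) for \(t\ge0\). Note that \(\delta X/(AZ)=t/\alpha\), which is exactly your \(s\) in Case~2.

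The paper handles this in one line by maximizing the left-hand side in \(t\). The maximum is \(0\) when \(\alpha\le1\), and \(\log\alpha-1+1/\alpha\) at \(t=\alpha-1\) when \(\alpha>1\). One must then still check that this value is at most \(\log(1+\alpha)\), a comparison the paper leaves implicit.

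You instead split at the threshold \(X/Z=A/\delta\), i.e.\ \(t=\alpha\). Below it, monotonicity of the logarithm gives the bound without using \(\delta X\) at all. Above it, the factorization \(1+t\le(1+\alpha)\,t/\alpha\) together with \(\log s\le s\) absorbs the excess into \(\delta X\).

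Your version uses no calculus and makes every comparison explicit. It also shows transparently that the \(\delta X\) term is only needed when \(X/Z>A/\delta\). The paper's maximization is more compact and, carried to the end, gives the sharp constant; this shows that \(\log(1+\alpha)\) is a convenient monotone upper bound rather than the optimum.

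(A harmless slip: in the \(A=0\) case you list the two sides in reverse order. The left side is \(0\) and the right side is \(\delta X\ge0\).)
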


\begin{proof}
The assertion is immediate when \(A=0\).  Otherwise, set
\(t:=C_0X/Z\) and \(\alpha:=C_0A/\delta\).  It remains to use
\[
  \log(1+t)-\frac{t}{\alpha}
  \le
  \log(1+\alpha),
  \qquad
  t\ge0,
\]
which follows by maximizing the left-hand side over \(t\ge0\).
\end{proof}

We shall also use the frame estimate
\begin{equation}
\label{eq:frame-scalar-H1-control}
  \|r\|_{H^1(\T^2)}^2
  \le
  C_{\rm fr}\|\nabla e\|_{L^2}^2,
\end{equation}
where \(C_{\rm fr}\) depends only on
\(\|m\|_{W^{1,\infty}}\) and the Poincare constant.  Indeed,
\(|r|_{L^2}\le|e|_{L^2}\) and
\[
  \|\nabla r\|_{L^2}
  \le
  \|\nabla e\|_{L^2}
  +
  \|\nabla m\|_{L^\infty}|e|_{L^2},
\]
so \eqref{eq:frame-scalar-H1-control} follows from Poincare's inequality.

\subsection{The differential inequality}

At any time \(t\) for which the shear-defect absorbability condition
\begin{equation}
\label{eq:resonant-defect-smallness-pointwise}
  K_m\lambda_1^{-1/2}
  |u(t)|
  \|\mathcal A_m\|_{L^\infty}
  \le
  \frac{\nu}{4}
\end{equation}
holds, Corollary~\ref{cor:defect-absorbed-log-compat} and the exact error
identity \eqref{eq:L2-error-identity} give
\begin{equation}
\label{eq:resonant-before-log-absorb}
  \frac{d}{dt}|e|^2
  +
  \nu\|\nabla e\|_{L^2}^2
  +
  2\mu |r|^2
  \le
  A(t)\mathcal L_m(r)|r|^2,
\end{equation}
where
\begin{equation}
\label{eq:resonant-A-def}
  A(t)
  :=
  \frac{2K_m}{\nu}
  \|\nabla u(t)\|_{L^2}^2.
\end{equation}

For \(r\not\equiv0\), the frame estimate
\eqref{eq:frame-scalar-H1-control} implies
\[
  \mathcal L_m(r)
  \le
  1+\log\left(
    1+
    C_{\rm fr}
    \frac{\|\nabla e\|_{L^2}^2}{|r|^2}
  \right).
\]
Lemma~\ref{lem:log-absorption}, applied with
\(X=\|\nabla e\|_{L^2}^2\), \(Z=|r|^2\), and
\(\delta=\nu/2\), therefore gives
\[
  A(t)\mathcal L_m(r)|r|^2
  \le
  \frac{\nu}{2}\|\nabla e\|_{L^2}^2
  +
  B(t)|r|^2,
\]
where
\begin{equation}
\label{eq:resonant-B-def}
  B(t)
  :=
  A(t)
  \left[
    1+\log\left(
      1+\frac{2C_{\rm fr}A(t)}{\nu}
    \right)
  \right].
\end{equation}
The same bound is immediate when \(r\equiv0\).  Substitution into
\eqref{eq:resonant-before-log-absorb} yields
\begin{equation}
\label{eq:resonant-basic-diff-ineq}
  \frac{d}{dt}|e|^2
  +
  \frac{\nu}{2}\|\nabla e\|_{L^2}^2
  +
  \bigl(2\mu-B(t)\bigr)|r|^2
  \le
  0.
\end{equation}

\subsection{Main convergence theorem}

We now formulate synchronization in terms of uniform reference-flow bounds and
the geometry of the projection field.

\begin{theorem}[Synchronization under controlled adapted geometry]
\label{thm:periodic-conv}
Let \(u\) be a strong solution of the periodic two-dimensional
Navier--Stokes equations.  Fix \(t_0\ge0\) and assume the uniform bounds
\[
  U_0
  :=
  \esssup_{t\ge t_0}|u(t)|
  <\infty,
  \qquad
  U_1
  :=
  \esssup_{t\ge t_0}\|\nabla u(t)\|_{L^2}
  <\infty.
\]
Let \(m\in W^{2,\infty}(\T^2;\R^2)\) be a time-independent unit field
admitting a controlled adapted covering.  Assume that its shear defect satisfies
\begin{equation}
\label{eq:resonant-defect-smallness}
  K_m\lambda_1^{-1/2}
  U_0
  \|\mathcal A_m\|_{L^\infty}
  \le
  \frac{\nu}{4},
\end{equation}
where \(K_m\) is the constant in
Corollary~\ref{cor:defect-absorbed-log-compat}.  Define
\begin{equation}
\label{eq:resonant-Bstar-def}
  A_*
  :=
  \frac{2K_m}{\nu}U_1^2,
  \qquad
  B_*
  :=
  A_*
  \left[
    1+\log\left(
      1+\frac{2C_{\rm fr}A_*}{\nu}
    \right)
  \right].
\end{equation}
If
\begin{equation}
\label{eq:resonant-gain-condition}
  \mu\ge\frac{B_*}{2},
\end{equation}
then every strong solution \(v\) of \eqref{eq:nudged} satisfies
\begin{equation}
\label{eq:resonant-decay}
  |v(t)-u(t)|^2
  \le
  \exp\left(
    -\frac{\nu\lambda_1}{2}(t-t_0)
  \right)
  |v(t_0)-u(t_0)|^2,
  \qquad
  t\ge t_0.
\end{equation}
In particular, \(v\) synchronizes exponentially with \(u\) in \(H\).
\end{theorem}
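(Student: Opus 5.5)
The plan is to integrate the differential inequality \eqref{eq:resonant-basic-diff-ineq} on \([t_0,\infty)\), after checking that its pointwise hypotheses hold for almost every \(t\ge t_0\).

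\emph{Regularity.} By Proposition~\ref{prop:bounded-feedback-wellposed}, both \(v\) and \(u\) are strong solutions. Hence \(e=v-u\in C([t_0,T];V)\cap L^2(t_0,T;D(A))\) and \(e_t\in L^2(t_0,T;H)\). The Lions--Magenes lemma then shows that \(t\mapsto|e(t)|^2\) is absolutely continuous on every bounded interval, and that the exact error identity \eqref{eq:L2-error-identity} holds for almost every \(t\).

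\emph{Applying the earlier estimates.} For almost every \(t\ge t_0\) we have \(|u(t)|\le U_0\). Therefore the global hypothesis \eqref{eq:resonant-defect-smallness} implies the pointwise condition \eqref{eq:resonant-defect-smallness-pointwise}, and Corollary~\ref{cor:defect-absorbed-log-compat} applies at such \(t\). This yields \eqref{eq:resonant-before-log-absorb}. Lemma~\ref{lem:log-absorption} together with the frame estimate \eqref{eq:frame-scalar-H1-control} then gives \eqref{eq:resonant-basic-diff-ineq} with \(B(t)\) defined by \eqref{eq:resonant-B-def}.

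\emph{Replacing \(B(t)\) by \(B_*\).} The map \(A\mapsto A[1+\log(1+2C_{\rm fr}A/\nu)]\) is nondecreasing on \([0,\infty)\). Since \(\|\nabla u(t)\|_{L^2}\le U_1\) almost everywhere, we get \(A(t)\le A_*\) and hence \(B(t)\le B_*\) for almost every \(t\ge t_0\). The gain condition \eqref{eq:resonant-gain-condition} then gives \(2\mu-B(t)\ge0\), so the \(|r|^2\) term can be dropped. What remains is
\[
\frac{d}{dt}|e|^2+\frac{\nu}{2}\|\nabla e\|_{L^2}^2\le0 .
\]
Poincar\'e's inequality \eqref{eq:poincare} converts this into
\[
\frac{d}{dt}|e|^2+\frac{\nu\lambda_1}{2}|e|^2\le0
\]
almost everywhere.

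\emph{Gronwall step.} Multiply by \(\exp(\nu\lambda_1(t-t_0)/2)\). The product \(\exp(\nu\lambda_1(t-t_0)/2)\,|e(t)|^2\) is absolutely continuous with almost-everywhere nonpositive derivative, so it is nonincreasing. This gives \eqref{eq:resonant-decay}.

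\emph{Main obstacle.} The only delicate point is justifying the chain of pointwise estimates almost everywhere. Theorem~\ref{thm:logarithmic-nonlinear-compatibility} is stated for \(e,u\in V\), which holds for every \(t\), but the bounds \(U_0,U_1\) are only essential suprema; this is why everything is done on a full-measure set of times. The case \(r(t)\equiv0\) is already handled in the derivation of \eqref{eq:resonant-basic-diff-ineq}. There is also a measurability issue: \(B(t)\) may fail to be measurable in a convenient way. To avoid this, I would work directly with the bound \(B(t)\le B_*\) at each fixed admissible time, before integrating.
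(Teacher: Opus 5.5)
Your proposal is correct and follows the paper's proof essentially step for step. Both arguments pass from the global defect bound to the pointwise one for a.e.\ \(t\ge t_0\), bound \(B(t)\le B_*\) by monotonicity of \(A\mapsto A[1+\log(1+2C_{\rm fr}A/\nu)]\), drop the \(|r|^2\) term under \(\mu\ge B_*/2\), and conclude with Poincar\'e and Gronwall; your additional regularity justification (absolute continuity of \(|e|^2\) via Lions--Magenes, and the integrating-factor form of Gronwall) spells out what the paper leaves implicit.
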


\begin{proof}
The bound \eqref{eq:resonant-defect-smallness} implies
\eqref{eq:resonant-defect-smallness-pointwise} for almost every
\(t\ge t_0\).  Moreover, \(A(t)\le A_*\), and the function
\[
  A
  \longmapsto
  A\left[
    1+\log\left(
      1+\frac{2C_{\rm fr}A}{\nu}
    \right)
  \right]
\]
is increasing on \([0,\infty)\).  Hence \(B(t)\le B_*\).  Under
\eqref{eq:resonant-gain-condition}, the differential inequality
\eqref{eq:resonant-basic-diff-ineq} therefore gives
\[
  \frac{d}{dt}|e|^2
  +
  \frac{\nu}{2}\|\nabla e\|_{L^2}^2
  \le
  0
  \qquad\text{for a.e. }t\ge t_0.
\]
Poincare's inequality and Gronwall's lemma yield
\eqref{eq:resonant-decay}.
\end{proof}

\begin{corollary}[Constant rational directions]
\label{cor:constant-rational-FLT}
Let the reference solution satisfy the hypotheses of
Theorem~\ref{thm:periodic-conv}, and let \(m=a\) be a constant rational
direction.  By Example~\ref{ex:constant-rational-directions}, \(m\) admits a
controlled adapted covering, while \(\mathcal A_m=0\).  Hence the
shear-defect condition is automatic, and every gain satisfying
\eqref{eq:resonant-gain-condition} yields exponential synchronization.
For \(a=e_1\) or \(a=e_2\), this recovers the exact-observation periodic
one-component FLT mechanism.
\end{corollary}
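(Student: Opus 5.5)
The corollary is a direct specialization of Theorem~\ref{thm:periodic-conv}, so the plan is to check its hypotheses for a constant rational direction and then identify the resulting scheme with the FLT one.

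First I would fix $m=a$ constant with $a^\perp=\pm\ell/|\ell|$ for a primitive $\ell\in\Z^2\setminus\{0\}$. Example~\ref{ex:constant-rational-directions} supplies the covering $\Phi(y,s)=ya+sb \bmod 2\pi\Z^2$ on $(\R/2\pi L\Z)^2$, which has degree $L^2$ and satisfies $q\equiv 1$ and $\partial_s\Phi=b=n\circ\Phi$. The map $D\Phi=(a\,|\,b)$ is a constant rotation matrix, so $\Gamma$ is constant, and $J=\rho=1$. Hence $\mathfrak C_\Phi<\infty$ trivially, and $m\in W^{2,\infty}$ because it is constant. This shows that $m$ admits a controlled adapted covering, so $K_m$ from Corollary~\ref{cor:defect-absorbed-log-compat} is defined. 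It depends only on $L$ and on these trivial bounds.

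Next, $\nabla m=0$ gives $a_m=a_n=0$ and $\nabla\cdot n=0$. Definition~\ref{def:shear-defect} then yields $\mathcal A_m=D_nn-(\nabla\cdot n)n\equiv 0$. The left-hand side of \eqref{eq:resonant-defect-smallness} is therefore zero, and the condition holds for every $U_0$ and $\nu$. All hypotheses of Theorem~\ref{thm:periodic-conv} are now in place. Given $U_0,U_1<\infty$ and $\mu\ge B_*/2$, that theorem yields the decay \eqref{eq:resonant-decay} directly. Since $C_{\rm fr}$ reduces to the Poincar\'e constant when $m$ is constant, $B_*$ depends only on $\nu$, $U_1$ and the covering.

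Finally, for $a=e_1$ or $a=e_2$ the example allows $L=1$ and $\Phi=\mathrm{id}$. In that case $Me=(e\cdot e_j)e_j$, so \eqref{eq:nudged} is exactly the exact-data one-component nudging of \cite{FLT2016}. Moreover, Lemma~\ref{lem:moving-frame-trilinear-expansion} reduces to the classical identity in which every term carries a factor of the observed component, which is the FLT mechanism.

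There is no real obstacle here. The only point needing care is confirming that the rational cover genuinely meets the $W^{2,\infty}$ controlled-covering requirements, and this is immediate because $\Phi$ is affine.
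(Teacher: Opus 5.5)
Your proposal is correct and follows the paper's own essentially one-line argument: the affine cover of Example~\ref{ex:constant-rational-directions} is a controlled adapted covering, $\mathcal A_m\equiv0$ makes \eqref{eq:resonant-defect-smallness} vacuous, and Theorem~\ref{thm:periodic-conv} then applies verbatim. Your additional checks ($D\Phi$ a constant rotation, $q=J=\rho=1$, $C_{\rm fr}$ depending only on the Poincar\'e constant) are accurate; the only nit, inherited from the paper, is that for $a=e_2$ one has $n=(-1,0)$, so the adapted cover is the quarter-turn $(y,s)\mapsto(-s,y)$ rather than literally the identity, which changes nothing.
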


\begin{remark}[Geometry and viscous absorbability]
The controlled adapted covering is the structural hypothesis that enables the
logarithmic estimate.  By contrast,
\eqref{eq:resonant-defect-smallness} controls the only nonlinear contribution
without a factor of \(e\cdot m\), and is therefore a quantitative viscous
absorbability condition.

For a trajectory on the periodic global attractor, the standard energy bound
gives
\[
  U_0
  \le
  \frac{|f|}{\nu\lambda_1}.
\]
Together with
\(\|\mathcal A_m\|_{L^\infty}
\le C\|\nabla m\|_{L^\infty}\),
this shows that the allowable variation of the projection direction decreases
as the forcing, and hence the characteristic size of the reference flow,
increases.  Equivalently, the condition becomes more restrictive at larger
Grashof number.
\end{remark}

\section{Kernel-free scalar projectional observations}
\label{sec:kernel-free}

We now consider the complementary case
\(\mathcal K_m=\{0\}\): no nonzero solenoidal field is invisible to the scalar
projection.  Here synchronization requires neither adapted coordinates nor
control of the shear defect.  Instead, qualitative injectivity and the compact
embedding \(V\hookrightarrow H\) yield an observability estimate with an
arbitrarily small viscous remainder.

\subsection{Compact observability}

Kernel-freeness has the following quantitative consequence by compactness.

\begin{lemma}[Compact observability with viscous leakage]
\label{lem:kernel-free-compact-observability}
Let \(m\in L^\infty(\T^2;\R^2)\) satisfy \(|m|=1\) almost everywhere and
\[
  \mathcal K_m
  =
  \{e\in H:e\cdot m=0\}
  =
  \{0\}.
\]
Then, for every \(\eta>0\), there exists \(C_\eta<\infty\) such that
\begin{equation}
\label{eq:kernel-free-compact-observability}
  |e|^2
  \le
  C_\eta|e\cdot m|^2
  +
  \eta\|e\|^2,
  \qquad
  e\in V.
\end{equation}
\end{lemma}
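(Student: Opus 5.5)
We argue by contradiction, using the standard compactness (Lions--type) argument. Fix \(\eta>0\) and suppose no finite \(C_\eta\) works. Then for each \(k\in\mathbb N\) there is \(e_k\in V\) with
\[
  |e_k|^2 > k\,|e_k\cdot m|^2+\eta\|e_k\|^2 .
\]
In particular \(e_k\neq0\). Since both sides are homogeneous of degree two, we may normalize \(|e_k|=1\). The inequality then gives the two bounds
\[
  \|e_k\|^2<\eta^{-1},
  \qquad
  |e_k\cdot m|^2<k^{-1}.
\]

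The first step is to extract a limit. The sequence \((e_k)\) is bounded in \(V\). Since \(V\) is a Hilbert space, a subsequence converges weakly in \(V\) to some \(e\in V\). By the compact embedding \(V\hookrightarrow H\) (Rellich on \(\T^2\)), the same subsequence converges strongly in \(H\). Strong convergence preserves the normalization, so \(|e|=1\). Moreover \(e\in H\), since \(H\) is closed in \(L^2\); it is divergence-free and mean-zero.

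The second step is to pass to the limit in the observation. Because \(|m|=1\) almost everywhere, the map \(z\mapsto z\cdot m\) is bounded on \(L^2\) with norm at most one. Hence
\[
  |e\cdot m|\le |e_k\cdot m|+|(e-e_k)\cdot m|\le k^{-1/2}+|e-e_k|\to0 ,
\]
so \(e\cdot m=0\). Therefore \(e\in\mathcal K_m=\{0\}\), which contradicts \(|e|=1\).

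The only point needing care is that the contradiction uses strong \(L^2\) convergence, which is where compactness of \(V\hookrightarrow H\) enters. Weak convergence alone would not preserve \(|e|=1\). No regularity of \(m\) beyond measurability is needed, because the observation is only used as a bounded multiplication operator on \(L^2\). The constant \(C_\eta\) produced this way is non-constructive and may blow up as \(\eta\to0\), consistent with the small-divisor remark for irrational constant directions.
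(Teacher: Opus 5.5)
Your proposal is correct and follows essentially the same argument as the paper. Both proofs normalize a sequence of counterexamples so that \(|e_k|=1\), use the compact embedding \(V\hookrightarrow H\) to obtain a strongly convergent subsequence with a unit-norm limit, and use boundedness of multiplication by \(m\) on \(L^2\) to show that the limit lies in \(\mathcal K_m=\{0\}\), which gives the contradiction.
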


\begin{proof}
Suppose that \eqref{eq:kernel-free-compact-observability} fails for some
\(\eta>0\).  For each \(j\ge1\), choose \(e_j\in V\) and normalize it so that
\[
  |e_j|=1,
  \qquad
  j|e_j\cdot m|^2+\eta\|e_j\|^2<1.
\]
Then \((e_j)\) is bounded in \(V\), while
\(|e_j\cdot m|\to0\).  By compactness of \(V\hookrightarrow H\), after
passing to a subsequence, we have
\[
  e_j\to e
  \quad\text{strongly in }H
\]
for some \(e\in H\), and the normalization gives \(|e|=1\).

Since multiplication by \(m\) is bounded on \(L^2\),
\[
  |e\cdot m|
  \le
  |(e-e_j)\cdot m|+|e_j\cdot m|
  \longrightarrow0.
\]
Thus \(e\in\mathcal K_m=\{0\}\), contradicting \(|e|=1\).
\end{proof}

The proof is qualitative, so \(C_\eta\) is not explicit and may deteriorate rapidly as \(\eta\downarrow0\). For constant irrational directions, this loss encodes the underlying small-divisor geometry.

\subsection{Exact synchronization in the kernel-free regime}

The compact-observability estimate is applied to the nonlinear production term,
so its viscous leakage is weighted by a reference-flow bound rather than by the
nudging gain.  Let \(C_{\rm nl}\) be such that
\begin{equation}
\label{eq:kernel-free-standard-nonlinear}
  |b(e,u,e)|
  \le
  \frac{\nu}{4}\|e\|^2
  +
  \frac{C_{\rm nl}}{\nu}\|u\|^2|e|^2.
\end{equation}

\begin{theorem}[Exact synchronization in the kernel-free regime]
\label{thm:kernel-free-exact-conv}
Let \(u\) be a strong solution of the periodic two-dimensional
Navier--Stokes equations.  Fix \(t_0\ge0\) and assume
\[
  U_1
  :=
  \esssup_{t\ge t_0}\|u(t)\|
  <\infty.
\]
Let \(m\in L^\infty(\T^2;\R^2)\) be time-independent, satisfy
\(|m|=1\) almost everywhere, and be kernel-free on \(H\).  Set
\begin{equation}
\label{eq:kernel-free-Lambda-star}
  \Lambda_*
  :=
  \frac{C_{\rm nl}}{\nu}U_1^2.
\end{equation}
Choose \(\eta>0\) so that
\begin{equation}
\label{eq:kernel-free-eta-condition}
  \Lambda_*\eta
  \le
  \frac{\nu}{4},
\end{equation}
and let \(C_\eta\) be the corresponding constant in
Lemma~\ref{lem:kernel-free-compact-observability}.  If
\begin{equation}
\label{eq:kernel-free-gain-condition}
  \mu
  \ge
  \Lambda_*C_\eta,
\end{equation}
then every strong solution \(v\) of \eqref{eq:nudged} satisfies
\begin{equation}
\label{eq:kernel-free-decay}
  |v(t)-u(t)|^2
  \le
  e^{-\nu\lambda_1(t-t_0)}
  |v(t_0)-u(t_0)|^2,
  \qquad
  t\ge t_0.
\end{equation}
Thus \(v\) synchronizes exponentially with \(u\) in \(H\).
\end{theorem}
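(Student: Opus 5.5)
We start from the exact error identity \eqref{eq:L2-error-identity} and bound the production term by \eqref{eq:kernel-free-standard-nonlinear}. For almost every \(t\ge t_0\) we have \(\|u(t)\|\le U_1\), which gives
\[
  \frac12\frac{d}{dt}|e|^2+\nu\|e\|^2+\mu|r|^2
  \le
  \frac{\nu}{4}\|e\|^2+\Lambda_*|e|^2 .
\]
The regularity of strong solutions (Proposition~\ref{prop:bounded-feedback-wellposed}) gives \(e\in L^2_{\rm loc}(V)\) with \(e_t\in L^2_{\rm loc}(H)\). Hence \(|e|^2\) is absolutely continuous and this inequality holds almost everywhere. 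Since \(e(t)\in V\) for almost every \(t\), we may apply Lemma~\ref{lem:kernel-free-compact-observability} pointwise in time to the factor \(|e|^2\) on the right. The key point is that the observability estimate is applied to the nonlinear production term, and not to the feedback.

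Using \(|e|^2\le C_\eta|r|^2+\eta\|e\|^2\), the right-hand side is bounded by
\[
  \frac{\nu}{4}\|e\|^2+\Lambda_*\eta\|e\|^2+\Lambda_*C_\eta|r|^2
  \le
  \frac{\nu}{2}\|e\|^2+\Lambda_*C_\eta|r|^2 .
\]
Here we used the choice \eqref{eq:kernel-free-eta-condition}. The gain condition \eqref{eq:kernel-free-gain-condition} then lets \(\mu|r|^2\) absorb \(\Lambda_*C_\eta|r|^2\). What remains is
\[
  \frac12\frac{d}{dt}|e|^2+\frac{\nu}{2}\|e\|^2\le0 ,
\]
that is, \(\frac{d}{dt}|e|^2+\nu\|e\|^2\le0\). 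Poincaré's inequality \eqref{eq:poincare} gives \(\frac{d}{dt}|e|^2+\nu\lambda_1|e|^2\le0\), and Gronwall's lemma on \([t_0,t]\) yields \eqref{eq:kernel-free-decay}.

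The argument has no real obstacle. The only points to check are bookkeeping ones. First, the constants must match, so that \(\Lambda_*\) in \eqref{eq:kernel-free-Lambda-star} is exactly the coefficient produced by \eqref{eq:kernel-free-standard-nonlinear} with \(\|u\|\le U_1\). Second, \(C_\eta\) must be time-independent, which holds because \(m\) is time-independent. Finally, the differential inequality must be justified for strong solutions via the Lions--Magenes identity \(\frac{d}{dt}|e|^2=2(e_t,e)\). Note that no upper bound on \(\mu\) appears, because larger \(\mu\) only strengthens the absorption step.
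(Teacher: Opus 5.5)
Your proposal is correct and follows the paper's proof essentially line by line. The exact error identity combined with \eqref{eq:kernel-free-standard-nonlinear} gives $\frac12\frac{d}{dt}|e|^2+\frac{3\nu}{4}\|e\|^2+\mu|r|^2\le\Lambda_*|e|^2$; compact observability is then applied to the production term $\Lambda_*|e|^2$ rather than to the feedback, the $\eta$- and gain conditions reduce this to $\frac12\frac{d}{dt}|e|^2+\frac{\nu}{2}\|e\|^2\le0$, and Poincar\'e plus Gronwall give the rate $\nu\lambda_1$, with your added justification of the differential inequality for strong solutions being a harmless refinement the paper leaves implicit.
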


\begin{proof}
The exact error identity and
\eqref{eq:kernel-free-standard-nonlinear} give, for almost every
\(t\ge t_0\),
\[
  \frac12\frac{d}{dt}|e|^2
  +
  \frac{3\nu}{4}\|e\|^2
  +
  \mu|r|^2
  \le
  \Lambda_*|e|^2.
\]
Compact observability yields
\[
  \Lambda_*|e|^2
  \le
  \Lambda_*C_\eta|r|^2
  +
  \Lambda_*\eta\|e\|^2.
\]
Using \eqref{eq:kernel-free-eta-condition} and
\eqref{eq:kernel-free-gain-condition}, we obtain
\[
  \frac12\frac{d}{dt}|e|^2
  +
  \frac{\nu}{2}\|e\|^2
  \le
  0.
\]
Poincare's inequality and Gronwall's lemma give
\eqref{eq:kernel-free-decay}.
\end{proof}

The parameter \(\eta\) is chosen independently of \(\mu\).  Consequently, the
leakage term is multiplied by \(\Lambda_*\), not by the gain, and
\eqref{eq:kernel-free-gain-condition} is purely a lower bound: every larger
gain is also admissible.

\subsection{Constant projection directions}

For a constant field, kernel-freeness is exactly the lattice-nonresonance
condition complementary to Example~\ref{ex:constant-rational-directions}.

\begin{proposition}[Fourier criterion for constant directions]
\label{prop:constant-direction-kernel-free}
Let \(m=a\in\mathbb S^1\) be constant and set \(n:=a^\perp\).  Then the
projection \(e\mapsto e\cdot a\) is kernel-free on \(H\) if and only if
\[
  k\cdot n\neq0
  \qquad
  \text{for every }k\in\mathbb Z^2\setminus\{0\}.
\]
Equivalently, \(n\) is not parallel to any nonzero lattice vector.  Thus
constant directions with irrational transverse slope are kernel-free, whereas
rational transverse directions admit invisible shear currents.
\end{proposition}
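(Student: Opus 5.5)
The plan is to work in Fourier variables, where a constant projection acts diagonally on each mode. Every \(e\in H\) expands as \(e=\sum_{k\neq0}\hat e_k e^{ik\cdot x}\), with \(k\cdot\hat e_k=0\) and \(\hat e_{-k}=\overline{\hat e_k}\). Since \(a\) is constant, \((e\cdot a)^\wedge_k=\hat e_k\cdot a\). Parseval then gives \(e\cdot a=0\) in \(L^2\) if and only if \(\hat e_k\cdot a=0\) for every \(k\neq0\). The problem therefore reduces to a pointwise linear-algebra question for each wavevector.

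Fix \(k\neq0\). Divergence-freeness forces \(\hat e_k\in\mathbb C\,k^\perp\), and the invisibility condition forces \(\hat e_k\in\mathbb C\,a^\perp=\mathbb C\,n\). Both lines are one-dimensional in \(\mathbb C^2\). If \(k^\perp\) and \(n\) are independent, their intersection is zero, so \(\hat e_k=0\). They are dependent exactly when \(k\) is parallel to \(n^\perp=-a\), that is, exactly when \(k\cdot n=0\). Hence, if \(k\cdot n\neq0\) for all \(k\in\Z^2\setminus\{0\}\), every Fourier coefficient of an invisible current vanishes and \(\mathcal K_a=\{0\}\).

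For the converse, suppose \(k\cdot n=0\) for some nonzero \(k\in\Z^2\). Then \(k\) is parallel to \(a\), and \(\ell:=k^\perp\) is a nonzero lattice vector parallel to \(n\). I would set \(e(x):=\cos(k\cdot x)\,n\). This field is real, periodic, and has zero mean. Its divergence is \(-\sin(k\cdot x)\,k\cdot n=0\), and \(e\cdot a=0\), so \(e\in\mathcal K_a\setminus\{0\}\). This is the same shear mode as in Example~\ref{ex:constant-rational-directions}, written with \(\psi=\cos\). The stated equivalence with ``\(n\) not parallel to a nonzero lattice vector'' holds because \(k\cdot n=0\) with \(k\neq0\) is equivalent to \(n\parallel k^\perp\), and \(k^\perp\in\Z^2\setminus\{0\}\). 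The remarks on irrational and rational slopes are restatements of this.

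The argument has no real obstacle. The only points needing care are the bookkeeping of complex coefficients together with the reality constraint \(\hat e_{-k}=\overline{\hat e_k}\), and the observation that the mean-zero condition is automatic because only \(k\neq0\) modes appear.
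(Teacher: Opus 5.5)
Your proof is correct and follows essentially the same route as the paper. The paper first writes \(e=\psi n\) and reads off \((k\cdot n)\widehat\psi_k=0\) from incompressibility, which is the scalar form of your observation that \(\mathbb C\,k^\perp\cap\mathbb C\,n=\{0\}\) unless \(k\cdot n=0\); the converse via the shear mode \(\cos(k\cdot x)\,n\) is identical to yours.
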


\begin{proof}
If \(e\in\mathcal K_a\), then \(e=\psi n\) for some
\(\psi\in L^2(\T^2)\).  The divergence-free and mean-zero conditions imply
\[
  n\cdot\nabla\psi=0,
  \qquad
  \widehat\psi_0=0.
\]
Writing
\(\psi=\sum_{k\in\mathbb Z^2}\widehat\psi_k e^{ik\cdot x}\), we obtain
\[
  (k\cdot n)\widehat\psi_k=0
  \qquad
  \text{for every }k\in\mathbb Z^2.
\]
The stated nonresonance condition therefore forces \(\psi=0\), and hence
\(e=0\).

Conversely, if \(k\cdot n=0\) for some nonzero lattice vector \(k\), then
\[
  e(x):=\cos(k\cdot x)n
\]
is nonzero, periodic, divergence-free, mean-zero, and satisfies \(e\cdot a=0\).
Thus the projection is not kernel-free.  Finally, \(k\cdot n=0\) precisely
when \(n\) is parallel to the lattice vector \(k^\perp\).
\end{proof}

Consequently, if \(a^\perp\) is not parallel to a lattice vector, then
Theorem~\ref{thm:kernel-free-exact-conv} gives exponential synchronization for
every sufficiently large gain.

Although such a direction is kernel-free, it is not uniformly coercive on
\(H\).  On a divergence-free Fourier mode of frequency \(k\), the ratio of the
observed amplitude to the full amplitude is
\({|k\cdot n|}/{|k|}\),
which can approach zero along high frequencies.  The leakage term in
\eqref{eq:kernel-free-compact-observability} leaves these poorly observed modes
to viscosity; correspondingly, \(C_\eta\) reflects the small-divisor geometry
of the direction.

\section{Type-I coarse scalar projectional observations}
\label{sec:coarse}

We now replace the exact scalar discrepancy \(r=e\cdot m\) by its coarse
observation \(I_hr\).  In both the regular resonant and kernel-free regimes,
the Type-I approximation estimate converts the feedback form
\((I_hr,r)\) into exact scalar damping, up to a derivative leakage absorbed by
viscosity under the usual gain--resolution condition.  Only the treatment of
the nonlinear term differs between the two regimes.

\subsection{Scalar Type-I observations}
\label{subsec:type-I-observations}

Let \(I_h:L^2(\T^2)\to L^2(\T^2)\) be a family of linear scalar observation
operators indexed by \(h>0\).  
We assume that
\begin{align}
\label{eq:coarse-Ih-stability}
  \|I_h\phi\|_{L^2}
  &\le
  c_0\|\phi\|_{L^2},
  && \phi\in L^2(\T^2),
  \\
\label{eq:coarse-Ih-approx}
  \|\phi-I_h\phi\|_{L^2}
  &\le
  c_1h\|\nabla\phi\|_{L^2},
  && \phi\in H^1(\T^2),
\end{align}
with constants \(c_0,c_1>0\) independent of \(h\).
These are the scalar Type-I conditions.  They are satisfied, for example, by
low Fourier projections, local or mollified volume averages, and suitable
\(L^2\)-stable finite-element or mesh-free quasi-interpolants.  The stability
bound also ensures that the coarse feedback operator is bounded on \(L^2\), so
Proposition~\ref{prop:bounded-feedback-wellposed} gives strong well-posedness of
\eqref{eq:coarse-nudged} for every fixed \(h\).

Raw nodal interpolation is not included: point evaluation is neither continuous
on \(L^2(\T^2)\) nor defined for a general \(H^1(\T^2)\) function in two
dimensions.  Nodal data must instead be incorporated through an \(L^2\)-stable
reconstruction or a higher-order Type-II framework, which we do not pursue here.

\subsection{Common scalar coercivity}
\label{subsec:common-scalar-coercivity}

For coarse observations, the exact dissipation \(|r|^2\) is replaced by
\((I_hr,r)\).  The Type-I approximation property recovers scalar coercivity up
to a viscous derivative leakage.
For \(m\in W^{1,\infty}\), let \(C_{\rm fr}\) denote the constant in
\eqref{eq:frame-scalar-H1-control}.

\begin{lemma}[Type-I scalar feedback coercivity]
\label{lem:coarse-feedback-coercivity}
Let \(m\in W^{1,\infty}(\T^2;\R^2)\) be a unit field, let \(e\in V\), and set
\(r:=e\cdot m\).  Then
\begin{equation}
\label{eq:coarse-feedback-coercivity}
  (I_hr,r)
  \ge
  \frac12|r|_{L^2}^2
  -
  \frac{c_1^2C_{\rm fr}}{2}
  h^2\|\nabla e\|_{L^2}^2.
\end{equation}
\end{lemma}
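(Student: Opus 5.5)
The plan is to split \(I_hr = r - (r - I_hr)\), which gives
\[
  (I_hr,r) = |r|_{L^2}^2 - (r - I_hr, r).
\]
The cross term will be handled by Cauchy--Schwarz followed by Young's inequality in the weighted form \(ab\le \tfrac12 a^2+\tfrac12 b^2\). This yields
\[
  |(r-I_hr,r)| \le \tfrac12\|r-I_hr\|_{L^2}^2 + \tfrac12|r|_{L^2}^2 .
\]
It follows that
\[
  (I_hr,r)\ge \tfrac12|r|_{L^2}^2-\tfrac12\|r-I_hr\|_{L^2}^2 .
\]
Self-adjointness or nonnegativity of \(I_h\) is never used, which matches the remark after \eqref{eq:coarse-L2-error-identity}.

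Next I will apply the Type-I approximation property \eqref{eq:coarse-Ih-approx} to the scalar \(\phi=r\). This requires \(r\in H^1(\T^2)\), and that is where the hypothesis \(m\in W^{1,\infty}\) enters. Indeed, since \(e\in V\subset H^1\) and \(m\in W^{1,\infty}\), the product \(r=e\cdot m\) lies in \(H^1\), with
\[
  \nabla r=(\nabla e)^\top m+(\nabla m)^\top e
\]
almost everywhere. The approximation property then gives
\[
  \|r-I_hr\|_{L^2}^2\le c_1^2h^2\|\nabla r\|_{L^2}^2 .
\]

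Finally, I will invoke the frame estimate \eqref{eq:frame-scalar-H1-control}. It bounds
\[
  \|\nabla r\|_{L^2}^2\le\|r\|_{H^1}^2\le C_{\rm fr}\|\nabla e\|_{L^2}^2 ,
\]
where I use that the \(H^1\)-norm dominates the gradient seminorm. Substituting this into the previous bound gives exactly \eqref{eq:coarse-feedback-coercivity}, with the constant \(\tfrac12c_1^2C_{\rm fr}\).

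None of the steps is a real obstacle. The only point needing care is justifying that \(r\in H^1\), with the product rule for a \(W^{1,\infty}\) multiplier of an \(H^1\) field, so that \eqref{eq:coarse-Ih-approx} applies. This is standard, for example by approximating \(e\) by smooth solenoidal fields.
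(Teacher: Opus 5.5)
Your proposal is correct and follows essentially the same route as the paper. You write \((I_hr,r)=|r|_{L^2}^2-((I-I_h)r,r)\), control the cross term by the Type-I approximation property applied to \(r\in H^1\), and close with the frame estimate \(\|\nabla r\|_{L^2}^2\le C_{\rm fr}\|\nabla e\|_{L^2}^2\). The only difference is the order of steps: you apply Young's inequality before invoking \eqref{eq:coarse-Ih-approx}, whereas the paper first bounds the cross term by \(c_1h\|\nabla r\|_{L^2}|r|_{L^2}\) and then uses Young. Both orderings give the same constant \(\tfrac12c_1^2C_{\rm fr}\).
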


\begin{proof}
Using \eqref{eq:coarse-Ih-approx}, we infer
\[
  (I_hr,r)
  =
  |r|_{L^2}^2-\bigl((I-I_h)r,r\bigr)
  \ge
  |r|_{L^2}^2
  -
  c_1h\|\nabla r\|_{L^2}|r|_{L^2}.
\]
The frame estimate
\(\|\nabla r\|_{L^2}^2
\le C_{\rm fr}\|\nabla e\|_{L^2}^2\)
and Young's inequality give
\eqref{eq:coarse-feedback-coercivity}.
\end{proof}

Only the approximation property \eqref{eq:coarse-Ih-approx} is used in this
coercivity estimate.  The stability condition
\eqref{eq:coarse-Ih-stability} instead guarantees that the feedback is bounded
on \(L^2\).  No positivity or self-adjointness of \(I_h\) is required.

\subsection{Type-I synchronization in the regular resonant regime}
\label{subsec:type-I-resonant}

We combine the common Type-I coercivity estimate with the logarithmic
compatibility mechanism of Sections~\ref{sec:energy-estimates}
and~\ref{sec:conv}.  The geometric hypotheses are the same as in the
exact-observation theorem.

\begin{theorem}[Type-I synchronization in the regular resonant regime]
\label{thm:coarse-projectional-conv}
Let \(u\) be a strong solution of the periodic two-dimensional
Navier--Stokes equations.  Fix \(t_0\ge0\) and assume
\[
  U_0
  :=
  \esssup_{t\ge t_0}|u(t)|
  <\infty,
  \qquad
  U_1
  :=
  \esssup_{t\ge t_0}\|\nabla u(t)\|_{L^2}
  <\infty.
\]
Let \(m\in W^{2,\infty}(\T^2;\R^2)\) be a time-independent unit field
admitting a controlled adapted covering, and assume
\begin{equation}
\label{eq:coarse-defect-smallness}
  K_m\lambda_1^{-1/2}
  U_0
  \|\mathcal A_m\|_{L^\infty}
  \le
  \frac{\nu}{4}.
\end{equation}
Define
\begin{equation}
\label{eq:coarse-Bstar-def}
  A_*
  :=
  \frac{2K_m}{\nu}U_1^2,
  \qquad
  B_*^{\rm c}
  :=
  A_*
  \left[
    1+\log\left(
      1+\frac{4C_{\rm fr}A_*}{\nu}
    \right)
  \right].
\end{equation}
Let \(I_h\) satisfy
\eqref{eq:coarse-Ih-stability}--\eqref{eq:coarse-Ih-approx}.  If
\begin{equation}
\label{eq:coarse-gain-lower}
  \mu\ge B_*^{\rm c},
\end{equation}
and
\begin{equation}
\label{eq:coarse-resolution-condition}
  c_1^2C_{\rm fr}\,\mu h^2
  \le
  \frac{\nu}{4},
\end{equation}
then the strong solution \(v\) of \eqref{eq:coarse-nudged} satisfies
\begin{equation}
\label{eq:coarse-decay}
  |v(t)-u(t)|^2
  \le
  \exp\left(
    -\frac{\nu\lambda_1}{2}(t-t_0)
  \right)
  |v(t_0)-u(t_0)|^2,
  \qquad
  t\ge t_0.
\end{equation}
\end{theorem}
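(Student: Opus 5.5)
Starting from the coarse error identity \eqref{eq:coarse-L2-error-identity} gives the estimate directly. Multiply it by two and apply Lemma~\ref{lem:coarse-feedback-coercivity} to the feedback term:
\[
  2\mu(I_hr,r)\ge \mu|r|^2-c_1^2C_{\rm fr}\mu h^2\|\nabla e\|_{L^2}^2
  \ge \mu|r|^2-\frac{\nu}{4}\|\nabla e\|_{L^2}^2 .
\]
The second inequality is the resolution condition \eqref{eq:coarse-resolution-condition}. Next, \eqref{eq:coarse-defect-smallness} implies the pointwise absorbability condition \eqref{eq:resonant-defect-smallness-pointwise} for a.e.\ \(t\ge t_0\). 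Corollary~\ref{cor:defect-absorbed-log-compat} therefore bounds \(2|b(e,u,e)|\) by \(\nu\|\nabla e\|^2+A(t)\mathcal L_m(r)|r|^2\), with \(A(t)\) as in \eqref{eq:resonant-A-def}. Combining these two bounds leaves
\[
  \frac{d}{dt}|e|^2+\frac{3\nu}{4}\|\nabla e\|_{L^2}^2+\mu|r|^2
  \le A(t)\mathcal L_m(r)|r|^2 .
\]
This has the same form as \eqref{eq:resonant-before-log-absorb}, except that the viscous term is \(3\nu/4\) instead of \(\nu\), and the feedback coefficient is \(\mu\) instead of \(2\mu\).

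The logarithmic factor is handled as in Section~\ref{sec:conv}. For \(r\not\equiv0\), the frame estimate \eqref{eq:frame-scalar-H1-control} gives \(\mathcal L_m(r)\le 1+\log(1+C_{\rm fr}\|\nabla e\|^2/|r|^2)\). We then apply Lemma~\ref{lem:log-absorption} with \(X=\|\nabla e\|^2\), \(Z=|r|^2\), \(C_0=C_{\rm fr}\), and the smaller budget \(\delta=\nu/4\). This gives \(A\mathcal L_m(r)|r|^2\le \frac{\nu}{4}\|\nabla e\|^2+A[1+\log(1+4C_{\rm fr}A/\nu)]|r|^2\), and this choice of \(\delta\) accounts for the constant \(4C_{\rm fr}\) in \(B^{\rm c}_*\). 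The case \(r\equiv0\) is trivial. The map \(A\mapsto A[1+\log(1+4C_{\rm fr}A/\nu)]\) is monotone and \(A(t)\le A_*\), so the coefficient is at most \(B^{\rm c}_*\). We arrive at
\[
  \frac{d}{dt}|e|^2+\frac{\nu}{2}\|\nabla e\|_{L^2}^2+(\mu-B_*^{\rm c})|r|^2\le0 .
\]

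By \eqref{eq:coarse-gain-lower}, the last term is nonnegative and can be dropped. Poincar\'e's inequality \eqref{eq:poincare} then gives \(\frac{d}{dt}|e|^2+\frac{\nu\lambda_1}{2}|e|^2\le0\), and Gronwall's lemma yields \eqref{eq:coarse-decay}. The regularity of strong solutions from Proposition~\ref{prop:bounded-feedback-wellposed} justifies the energy identity for a.e.\ \(t\): we have \(e\in L^2(D(A))\), \(e_t\in L^2(H)\), and \(r\in H^1\) since \(m\in W^{2,\infty}\).

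The only delicate point is the viscous budget. The coarse leakage takes \(\nu/4\), the defect and Young step take \(\nu/2\) (that is, \(\nu\) after doubling), and the log absorption takes \(\nu/4\). Together these must leave \(\nu/2\) of the original \(2\nu\). In the doubled inequality this is \(2-1-\tfrac14-\tfrac14=\tfrac12\), so the budget closes. The one thing to check carefully is that the halving of \(\mu\) by the coercivity lemma matches the gain condition \(\mu\ge B^{\rm c}_*\), with no factor \(1/2\), as stated.
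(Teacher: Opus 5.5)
Your proposal is correct and follows the paper's proof essentially step for step. You double the coarse error identity, apply the Type-I coercivity lemma with the resolution condition to reach the $\frac{3\nu}{4}$ inequality, absorb the logarithm via Lemma~\ref{lem:log-absorption} with $\delta=\nu/4$ (which produces the $4C_{\rm fr}$ in $B_*^{\rm c}$), and close with monotonicity, the gain condition $\mu\ge B_*^{\rm c}$, Poincar\'e, and Gronwall; your viscous bookkeeping $2-1-\tfrac14-\tfrac14=\tfrac12$ matches the paper's.
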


\begin{proof}
The coarse error identity and
Lemma~\ref{lem:coarse-feedback-coercivity} give
\[
  \frac{d}{dt}|e|^2
  +
  \left(
    2\nu-c_1^2C_{\rm fr}\,\mu h^2
  \right)
  \|\nabla e\|_{L^2}^2
  +
  \mu|r|_{L^2}^2
  \le
  2|b(e,u,e)|.
\]
By \eqref{eq:coarse-defect-smallness} and
Corollary~\ref{cor:defect-absorbed-log-compat}, we have
\[
  2|b(e,u,e)|
  \le
  \nu\|\nabla e\|_{L^2}^2
  +
  A(t)\mathcal L_m(r)|r|_{L^2}^2,
  \qquad
  A(t):=
  \frac{2K_m}{\nu}\|\nabla u(t)\|_{L^2}^2.
\]
Using \eqref{eq:coarse-resolution-condition}, we therefore obtain
\[
  \frac{d}{dt}|e|^2
  +
  \frac{3\nu}{4}\|\nabla e\|_{L^2}^2
  +
  \mu|r|_{L^2}^2
  \le
  A(t)\mathcal L_m(r)|r|_{L^2}^2.
\]

The frame estimate \eqref{eq:frame-scalar-H1-control} and
Lemma~\ref{lem:log-absorption}, with \(\delta=\nu/4\), imply
\[
  A(t)\mathcal L_m(r)|r|_{L^2}^2
  \le
  \frac{\nu}{4}\|\nabla e\|_{L^2}^2
  +
  B_{\rm c}(t)|r|_{L^2}^2,
\]
where
\[
  B_{\rm c}(t)
  :=
  A(t)
  \left[
    1+\log\left(
      1+\frac{4C_{\rm fr}A(t)}{\nu}
    \right)
  \right].
\]
This remains valid when \(r\equiv0\).  Hence
\[
  \frac{d}{dt}|e|^2
  +
  \frac{\nu}{2}\|\nabla e\|_{L^2}^2
  +
  \bigl(\mu-B_{\rm c}(t)\bigr)|r|_{L^2}^2
  \le
  0.
\]
Since \(A(t)\le A_*\) and the defining function of \(B_{\rm c}\) is
increasing, \(B_{\rm c}(t)\le B_*^{\rm c}\).  Condition
\eqref{eq:coarse-gain-lower}, followed by Poincare's inequality and
Gronwall's lemma, proves \eqref{eq:coarse-decay}.
\end{proof}

\begin{remark}[Comparison with exact observations]
Let \(B_*\) be the exact-observation quantity defined in
\eqref{eq:resonant-Bstar-def}.  Since
\[
  1+4x\le(1+2x)^2,
  \qquad
  x\ge0,
\]
one has \(B_*^{\rm c}\le2B_*\).  Thus the sufficient coarse lower-gain
threshold is at most four times the sufficient exact threshold
\(\mu\ge B_*/2\).  The essential new restriction is the Type-I
gain--resolution balance
\[
  \mu h^2
  \lesssim
  \nu,
\]
which prevents the interpolation leakage from overwhelming viscosity.
\end{remark}

\subsection{Type-I synchronization in the kernel-free regime}
\label{subsec:type-I-kernel-free}

We now combine Type-I scalar coercivity with compact observability.  The
argument is direct from the coarse error identity and does not pass through the
exact-observation theorem.

\begin{theorem}[Type-I synchronization in the kernel-free regime]
\label{thm:coarse-kernel-free-conv}
Let \(u\) be a strong solution of the periodic two-dimensional
Navier--Stokes equations.  Fix \(t_0\ge0\) and assume
\[
  U_1
  :=
  \esssup_{t\ge t_0}\|u(t)\|
  <\infty.
\]
Let \(m\in W^{1,\infty}(\T^2;\R^2)\) be a time-independent unit field that
is kernel-free on \(H\), and define
\begin{equation}
\label{eq:coarse-kernel-free-Lambda}
  \Lambda_*
  :=
  \frac{C_{\rm nl}}{\nu}U_1^2.
\end{equation}
Choose \(\eta>0\) so that
\begin{equation}
\label{eq:coarse-kernel-free-eta}
  \Lambda_*\eta
  \le
  \frac{\nu}{4},
\end{equation}
and let \(C_\eta\) be the corresponding constant in
\eqref{eq:kernel-free-compact-observability}.  Let \(I_h\) satisfy
\eqref{eq:coarse-Ih-stability}--\eqref{eq:coarse-Ih-approx}.  If
\begin{equation}
\label{eq:coarse-kernel-free-gain}
  \mu
  \ge
  2\Lambda_*C_\eta,
\end{equation}
and
\begin{equation}
\label{eq:coarse-kernel-free-resolution}
  c_1^2C_{\rm fr}\,\mu h^2
  \le
  \frac{\nu}{4},
\end{equation}
then the strong solution \(v\) of \eqref{eq:coarse-nudged} satisfies
\begin{equation}
\label{eq:coarse-kernel-free-decay}
  |v(t)-u(t)|^2
  \le
  \exp\left(
    -\frac{\nu\lambda_1}{2}(t-t_0)
  \right)
  |v(t_0)-u(t_0)|^2,
  \qquad
  t\ge t_0.
\end{equation}
\end{theorem}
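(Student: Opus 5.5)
We start from the coarse error identity \eqref{eq:coarse-L2-error-identity}, multiplied by two, and bound the feedback term from below with Lemma~\ref{lem:coarse-feedback-coercivity}. Multiplying \eqref{eq:coarse-feedback-coercivity} by \(2\mu\) gives \(2\mu(I_hr,r)\ge\mu|r|^2-c_1^2C_{\rm fr}\mu h^2\|\nabla e\|_{L^2}^2\). The resolution condition \eqref{eq:coarse-kernel-free-resolution} bounds the leakage by \(\frac{\nu}{4}\|e\|^2\). We obtain, for almost every \(t\ge t_0\),
\[
  \frac{d}{dt}|e|^2+\frac{7\nu}{4}\|e\|^2+\mu|r|^2\le 2|b(e,u,e)|.
\]
The hypothesis \(m\in W^{1,\infty}\) is used only at this step. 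It makes \(r=e\cdot m\in H^1\), so that the Type-I approximation property and the frame estimate \eqref{eq:frame-scalar-H1-control} apply.

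Next, we treat the nonlinear term exactly as in the proof of Theorem~\ref{thm:kernel-free-exact-conv}. By \eqref{eq:kernel-free-standard-nonlinear} and the definition of \(U_1\),
\[
  2|b(e,u,e)|\le\frac{\nu}{2}\|e\|^2+2\Lambda_*|e|^2 .
\]
We then apply compact observability, Lemma~\ref{lem:kernel-free-compact-observability}, with the chosen \(\eta\):
\[
  2\Lambda_*|e|^2\le 2\Lambda_*C_\eta|r|^2+2\Lambda_*\eta\|e\|^2\le 2\Lambda_*C_\eta|r|^2+\frac{\nu}{2}\|e\|^2 ,
\]
where the last step uses \eqref{eq:coarse-kernel-free-eta}. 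The factor \(2\) in the gain hypothesis \eqref{eq:coarse-kernel-free-gain} is needed because the coercivity lemma retains only half of \(|r|^2\). Under that hypothesis, \(\mu|r|^2\) absorbs \(2\Lambda_*C_\eta|r|^2\).

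Collecting terms gives \(\frac{d}{dt}|e|^2+\frac{3\nu}{4}\|e\|^2\le0\). This is stronger than the needed \(\frac{\nu}{2}\|e\|^2\). Poincaré's inequality \eqref{eq:poincare} and Gronwall's lemma then give \eqref{eq:coarse-kernel-free-decay}; the time derivative of \(|e|^2\) is justified by the strong-solution regularity of Proposition~\ref{prop:bounded-feedback-wellposed}.

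The only delicate point is the bookkeeping of viscous fractions. The three leakages are the interpolation term, the nonlinear term, and the observability remainder, and each is capped at \(\nu/4\) in the once-differentiated identity, i.e.\ at \(\nu/2\) after doubling. Their total must stay below \(2\nu\), which the estimates above confirm. No other step requires new ideas.
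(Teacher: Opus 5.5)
Your proposal is correct and follows the paper's proof essentially step for step: the coarse error identity, the Type-I coercivity lemma combined with the resolution condition, the standard nonlinear bound \eqref{eq:kernel-free-standard-nonlinear}, and compact observability with the chosen \(\eta\), with the factor \(2\) in the gain condition compensating for the halved scalar damping. The only difference is that you work with the doubled identity, so your final inequality \(\frac{d}{dt}|e|^2+\frac{3\nu}{4}\|e\|^2\le0\) is exactly the paper's \(\frac12\frac{d}{dt}|e|^2+\frac{3\nu}{8}\|\nabla e\|_{L^2}^2\le0\).
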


\begin{proof}
The coarse error identity,
Lemma~\ref{lem:coarse-feedback-coercivity}, and
\eqref{eq:kernel-free-standard-nonlinear} give
\[
  \frac12\frac{d}{dt}|e|^2
  +
  \left(
    \frac{3\nu}{4}
    -
    \frac{c_1^2C_{\rm fr}}{2}\mu h^2
  \right)
  \|\nabla e\|_{L^2}^2
  +
  \frac{\mu}{2}|r|_{L^2}^2
  \le
  \Lambda_*|e|^2.
\]
Compact observability implies
\[
  \Lambda_*|e|^2
  \le
  \Lambda_*C_\eta|r|_{L^2}^2
  +
  \Lambda_*\eta\|\nabla e\|_{L^2}^2.
\]
Consequently, we get
\[
  \frac12\frac{d}{dt}|e|^2
  +
  \left(
    \frac{3\nu}{4}
    -
    \frac{c_1^2C_{\rm fr}}{2}\mu h^2
    -
    \Lambda_*\eta
  \right)
  \|\nabla e\|_{L^2}^2
  +
  \left(
    \frac{\mu}{2}
    -
    \Lambda_*C_\eta
  \right)
  |r|_{L^2}^2
  \le
  0.
\]
Conditions \eqref{eq:coarse-kernel-free-eta}--%
\eqref{eq:coarse-kernel-free-resolution} make the two coefficients at least
\(3\nu/8\) and \(0\), respectively.  Hence
\[
  \frac12\frac{d}{dt}|e|^2
  +
  \frac{3\nu}{8}\|\nabla e\|_{L^2}^2
  \le
  0.
\]
Poincare's inequality and Gronwall's lemma imply
\eqref{eq:coarse-kernel-free-decay}.
\end{proof}

\begin{remark}[Gain--resolution window and common Type-I layer]
The lower bound \eqref{eq:coarse-kernel-free-gain} comes from compact
observability, whereas \eqref{eq:coarse-kernel-free-resolution} is the upper
gain--resolution restriction created by Type-I interpolation leakage.  The two
conditions are compatible for sufficiently small \(h\); for instance, it is
enough that
\[
  2c_1^2C_{\rm fr}\Lambda_*C_\eta h^2
  \le
  \frac{\nu}{4}.
\]

The resonant and kernel-free Type-I theorems share only the scalar coercivity
Lemma~\ref{lem:coarse-feedback-coercivity}.  Their exact-data mechanisms remain
different: logarithmic control of an invisible shear family in the regular
resonant case, and compact observability of the full error in the kernel-free
case.  Coarse observation introduces no new global geometric hypothesis, but
the Type-I approximation step requires the additional regularity
\(m\in W^{1,\infty}\).
\end{remark}

\section{Parabolic upgrade from
\texorpdfstring{\(L^2\)}{L2} to
\texorpdfstring{\(H^1\)}{H1} synchronization}
\label{sec:V-decay}

The preceding four convergence theorems establish exponential synchronization
in \(H\).  On the periodic two-dimensional domain, this decay automatically
upgrades to \(V\)-decay.  The argument uses the enstrophy cancellation and only
the \(L^2\)-boundedness of the feedback; no coercivity of the feedback at the
\(V\)-level is required.

\begin{theorem}[Upgrade from \(H\) to \(V\)]
\label{thm:H-to-V-upgrade}
Assume the hypotheses of one of
Theorems~\ref{thm:periodic-conv},
\ref{thm:kernel-free-exact-conv},
\ref{thm:coarse-projectional-conv}, or
\ref{thm:coarse-kernel-free-conv}.  Write the corresponding \(H\)-decay
estimate as
\begin{equation}
\label{eq:H-decay-for-upgrade}
  |e(t)|^2
  \le
  C_H e^{-\gamma_H(t-t_0)},
  \qquad
  t\ge t_0.
\end{equation}
Thus \(\gamma_H=\nu\lambda_1\) in
Theorem~\ref{thm:kernel-free-exact-conv}, and
\(\gamma_H=\nu\lambda_1/2\) in the other three cases.

Then, for every
\[
  0<\gamma_V<\min\{\nu\lambda_1,\gamma_H\},
\]
there exists \(C_V<\infty\) such that
\begin{equation}
\label{eq:V-decay-upgrade}
  \|v(t)-u(t)\|^2
  \le
  C_V e^{-\gamma_V(t-t_0)},
  \qquad
  t\ge t_0.
\end{equation}
In particular, all exact and Type-I synchronization results of this paper hold
exponentially in both \(H\) and \(V\).
\end{theorem}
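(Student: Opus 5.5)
We test the error equation with \(Ae\), which is legitimate for strong solutions. Write the feedback uniformly as \(-\mu P_\sigma\mathcal M e\), where \(\mathcal M=M\) or \(M_h\). In both cases \(|P_\sigma\mathcal M e|\le c_0|e|\), with \(c_0=1\) in the exact case.

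Since \(v=u+e\), we rewrite \eqref{eq:error} as
\[
  e_t+\nu Ae+B(e,u)+B(u,e)+B(e,e)=-\mu P_\sigma\mathcal M e .
\]
On the periodic torus the enstrophy cancellation \(b(w,w,Aw)=0\) holds, so \((B(e,e),Ae)=0\). The two mixed terms are handled by the identity \(b(e,u,Ae)+b(u,e,Ae)=-b(e,e,Au)\), which follows from the periodic identity \(b(u,u,Au)=0\) applied to \(u+e\) by polarization. For \(u\in D(A)\) we then estimate, using Ladyzhenskaya's inequality,
\[
  |b(e,e,Au)|\le C|e|^{1/2}\|e\|\,|Ae|^{1/2}|Au| .
\]
This requires \(|Au|\) in \(L^2\) in time, which is not a uniform bound. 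To avoid that, we instead bound the two terms directly:
\[
  |b(e,u,Ae)|+|b(u,e,Ae)|\le C|e|^{1/2}\|e\|^{1/2}\|u\|^{1/2}|Au|^{1/2}|Ae|+C|u|^{1/2}\|u\|^{1/2}\|e\|^{1/2}|Ae|^{3/2}.
\]
After Young's inequality this gives
\[
  \frac{d}{dt}\|e\|^2+\nu|Ae|^2\le \frac{C}{\nu}\|u\||Au|\,|e|\,\|e\|+\frac{C}{\nu^3}|u|^2\|u\|^2\|e\|^2+\frac{C\mu^2c_0^2}{\nu}|e|^2 .
\]
The feedback enters only as the forcing term \(\mu^2c_0^2|e|^2/\nu\), because \(2\mu|(P_\sigma\mathcal M e,Ae)|\le\frac{\nu}{2}|Ae|^2+\frac{2\mu^2c_0^2}{\nu}|e|^2\).

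Next we reduce everything to a linear inequality for \(y=\|e\|^2\). Using \(|Ae|^2\ge\lambda_1\|e\|^2\), we keep a portion \((\nu\lambda_1-\epsilon)y\) of the dissipation. The first term is bounded by Young's inequality as \(\epsilon y+C_\epsilon\nu^{-2}\|u\|^2|Au|^2|e|^2\). This yields
\[
  y'+(\nu\lambda_1-2\epsilon)y\le \alpha(t)y+\beta(t)|e|^2,
\]
with \(\alpha=C\nu^{-3}U_0^2U_1^2\) and \(\beta=C(\mu^2+U_1^2|Au|^2)\).

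The step I expect to be the main obstacle is the constant part of \(\alpha\): it does not automatically fall below \(\epsilon\). I would therefore not keep it as a rate term, and instead absorb it using the \(H\)-decay. The \(H\)-level proofs of the four theorems actually give the integrated bound
\[
  \int_t^{t+1}\|e\|^2\,ds\le C e^{-\gamma_H(t-t_0)},
\]
which comes from integrating the dissipative differential inequalities such as \eqref{eq:resonant-basic-diff-ineq}. Consequently \(\int_t^{t+1}\alpha y\,ds\) also decays like \(e^{-\gamma_H t}\).

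For the \(|Au|^2\) factor in \(\beta\), I would use the standard bound \(\int_t^{t+1}|Au|^2\,ds\le C(U_0,U_1,|f|,\nu)\), obtained by integrating the reference enstrophy equation over unit intervals. Then
\[
  \int_t^{t+1}\beta|e|^2\,ds\le C\sup_{[t,t+1]}|e|^2\le Ce^{-\gamma_H(t-t_0)}.
\]

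We then apply a uniform-Gronwall-type argument. Take the inequality \(y'+(\nu\lambda_1-2\epsilon)y\le g(t)\), where \(g=\alpha y+\beta|e|^2\) satisfies \(\int_t^{t+1}g\le Ce^{-\gamma_H(t-t_0)}\). Multiplying by \(e^{(\nu\lambda_1-2\epsilon)t}\) and summing over unit intervals gives
\[
  y(t)\le C e^{-\min\{\nu\lambda_1-2\epsilon,\gamma_H\}(t-t_0)}
\]
for \(t\ge t_0+1\). At the equal-rate case we pick up a polynomial factor \(t\), which is absorbed by lowering the rate slightly.

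Finally we choose \(\epsilon\) so that \(\gamma_V<\min\{\nu\lambda_1-2\epsilon,\gamma_H\}\). On the initial interval \([t_0,t_0+1]\), \(y\) is bounded by the strong-solution regularity of Proposition \ref{prop:bounded-feedback-wellposed}. This yields \eqref{eq:V-decay-upgrade}, with \(C_V\) depending on \(\|e(t_0)\|\), \(U_0\), \(U_1\), \(|f|\), \(\mu\), \(c_0\) and \(\nu\).
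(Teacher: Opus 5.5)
Your proof is correct. Its skeleton matches the paper's: test with \(Ae\), remove \(b(e,e,Ae)\) by enstrophy cancellation, treat the feedback as an \(L^2\)-bounded forcing, use the unit-interval bound on \(\int_t^{t+1}|Au|^2\,ds\), and finish by variation of constants over unit intervals.

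The genuine difference is in the mixed terms. The paper keeps the polarized identity, so the nonlinearity is exactly \(b(e,e,Au)\). Agmon's inequality together with \(\|e\|^2\le|e||Ae|\) then gives \(C|e||Ae||Au|\). After Young's inequality the right-hand side is \(\frac{C}{\nu}(|Au|^2+\mu^2C_{\rm fb}^2)|e|^2\), so the pointwise \(H\)-decay \eqref{eq:H-decay-for-upgrade} is all that is needed.

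Your stated reason for abandoning this route does not hold. Your own bound \(C|e|^{1/2}\|e\||Ae|^{1/2}|Au|\) closes at once via \(\|e\|\le|e|^{1/2}|Ae|^{1/2}\). The weight \(|Au|^2\) it produces needs exactly the unit-interval integrability you later use for \(\beta\) anyway.

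Estimating \(b(e,u,Ae)\) and \(b(u,e,Ae)\) separately is also valid. However, because you do not interpolate \(\|e\|\) there, you create the constant-coefficient term \(\alpha y\), which cannot be absorbed into the decay rate. Your remedy is the integrated bound \(\int_t^{t+1}\|e\|^2\,ds\le Ce^{-\gamma_H(t-t_0)}\). This is legitimate: it follows by integrating the dissipative inequalities in the four \(H\)-level proofs, or from the \(L^2\) error identity combined with the \(H\)-decay. So your argument goes through; it just needs an extra integrated-dissipation step that the paper's route avoids.

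Even within your decomposition the obstacle is avoidable. Using \(\|e\|^{1/2}\le|e|^{1/4}|Ae|^{1/4}\) in the \(b(u,e,Ae)\) bound gives \(\frac{\nu}{8}|Ae|^2+C\nu^{-7}|u|^4\|u\|^4|e|^2\), which leaves only \(|e|^2\) on the right-hand side. In short, your route avoids the polarization step at the cost of that extra integrated argument. The paper's route is shorter and reduces everything to a single forcing term controlled directly by the \(H\)-decay.
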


\begin{proof}
Let \(\mathcal M=M\) in the exact case and
\(\mathcal M=M_h\) in the coarse case.  In either case,
\[
  \|\mathcal M z\|_{L^2}
  \le
  C_{\rm fb}|z|,
  \qquad
  C_{\rm fb}
  =
  \begin{cases}
    1, & \mathcal M=M,\\
    c_0, & \mathcal M=M_h,
  \end{cases}
\]
where the coarse bound follows from
\eqref{eq:coarse-Ih-stability}.

For periodic two-dimensional solenoidal fields, the enstrophy cancellation
\[
  b(z,z,Az)=0
\]
holds for every \(z\in D(A)\).  Taking the coefficient of \(\theta^2\) in
the identity
\[
  b(u+\theta e,u+\theta e,A(u+\theta e))=0
\]
gives
\begin{equation}
\label{eq:polarized-enstrophy}
  b(e,e,Au)
  +
  b(e,u,Ae)
  +
  b(u,e,Ae)
  =
  0.
\end{equation}
Writing \(v=u+e\) in the error equation and testing with \(Ae\), the nonlinear
terms are
\[
  b(e,u,Ae)+b(u,e,Ae)+b(e,e,Ae).
\]
Using \(b(e,e,Ae)=0\) and \eqref{eq:polarized-enstrophy}, we therefore obtain
\begin{equation}
\label{eq:strong-error-identity}
  \frac12\frac{d}{dt}\|e\|^2
  +
  \nu|Ae|^2
  =
  b(e,e,Au)
  -
  \mu(\mathcal M e,Ae)
\end{equation}
for almost every \(t\ge t_0\).

Agmon's inequality and the interpolation estimate
\(\|e\|^2\le |e||Ae|\) imply
\[
  |b(e,e,Au)|
  \le
  \|e\|_{L^\infty}\|e\||Au|
  \le
  C|e||Ae||Au|.
\]
Moreover, we have
\[
  \mu|(\mathcal M e,Ae)|
  \le
  \mu C_{\rm fb}|e||Ae|.
\]
Applying Young's inequality in \eqref{eq:strong-error-identity}, we obtain
\begin{equation}
\label{eq:strong-error-differential}
  \frac{d}{dt}\|e\|^2
  +
  \nu|Ae|^2
  \le
  \frac{C}{\nu}
  \left(
    |Au|^2+\mu^2C_{\rm fb}^2
  \right)
  |e|^2.
\end{equation}
Since \(|Ae|^2\ge\lambda_1\|e\|^2\), the left-hand side controls the
\(V\)-energy.

It remains to control the coefficient involving \(Au\).  
Testing the reference equation \eqref{eq:NSE} with \(Au\)
and using the same enstrophy
cancellation gives
\[
  \frac{d}{dt}\|u\|^2
  +
  \nu|Au|^2
  \le
  \frac{|f|^2}{\nu}.
\]
The uniform bound \(\|u(t)\|\le U_1\) therefore implies
\begin{equation}
\label{eq:reference-Au-sliding-bound}
  \int_t^{t+1}|Au(s)|^2\,ds
  \le
  \frac{U_1^2}{\nu}
  +
  \frac{|f|^2}{\nu^2},
  \qquad
  t\ge t_0.
\end{equation}
Consequently, the coefficient on the right-hand side of
\eqref{eq:strong-error-differential} has uniformly bounded integrals over
unit time intervals.

Set \(y(t):=\|e(t)\|^2\).  Combining
\eqref{eq:strong-error-differential} with
\eqref{eq:H-decay-for-upgrade} and the spectral inequality gives
\[
  y'(t)+\nu\lambda_1y(t)
  \le
  C_H q(t)e^{-\gamma_H(t-t_0)},
\]
where
\[
  q(t)
  :=
  \frac{C}{\nu}
  \left(
    |Au(t)|^2+\mu^2C_{\rm fb}^2
  \right)
\]
has uniformly bounded integrals over intervals of length one by
\eqref{eq:reference-Au-sliding-bound}.  Variation of constants yields
\[
  y(t)
  \le
  e^{-\nu\lambda_1(t-t_0)}y(t_0)
  +
  C_H
  \int_{t_0}^t
    e^{-\nu\lambda_1(t-s)}
    q(s)e^{-\gamma_H(s-t_0)}
  \,ds.
\]
For any
\(\gamma_V<\min\{\nu\lambda_1,\gamma_H\}\), the integrand is bounded by
\[
  e^{-\gamma_V(t-t_0)}
  e^{-(\nu\lambda_1-\gamma_V)(t-s)}q(s).
\]
Partitioning \([t_0,t]\) into unit intervals and using the uniform integral
bound for \(q\), the remaining convolution is bounded independently of \(t\).
This proves \eqref{eq:V-decay-upgrade}.
\end{proof}

\begin{remark}[No additional observation hypothesis]
The upgrade uses only the exponential \(H\)-decay and the boundedness of the
feedback on \(L^2\).  It therefore introduces no new geometric assumption,
gain condition, or gain--resolution restriction.  In particular, derivatives
of the projection field and positivity of the feedback at the \(V\)-level play
no role.  The constant \(C_V\) may depend on the fixed gain \(\mu\), and in the
coarse case on the fixed observation resolution, but the admissible parameter
ranges are exactly those of the corresponding \(H\)-synchronization theorem.
\end{remark}

\section{Concluding remarks}
\label{sec:concluding-remarks}

We have established exponential synchronization in both \(H\) and \(V\) from a
single signed scalar velocity projection through two distinct mechanisms.  
In the regular resonant regime, static observability fails because invisible shear currents are present.
Synchronization is instead driven by nonlinear
compatibility: controlled adapted coordinates and a moving-frame expansion expose
FLT-type logarithmic terms involving the observed scalar, while the only purely
transverse contribution is measured by the shear defect and absorbed by
viscosity.  In the kernel-free regime, no such transfer mechanism is needed.
Qualitative injectivity and compactness yield observability with viscous
high-frequency leakage.  Since this leakage is applied only to the nonlinear
production term, exact kernel-free observations admit every sufficiently large
gain.

For constant projection directions, the distinction is arithmetic.  Rational
transverse directions support periodic invisible shear modes and belong to the
resonant mechanism, whereas nonresonant directions are kernel-free.  The explicit
sinusoidal family shows that the resonant theory is genuinely nonconstant and
persists under small controlled geometric perturbations, with the resulting
variation measured by the shear defect.

A common Type-I approximation layer extends both mechanisms to coarse scalar
data. 
Its principal new parameter restriction is the familiar gain--resolution
balance \(\mu h^2\lesssim\nu\); in the kernel-free case, the scalar
approximation step also requires \(m\in W^{1,\infty}\).
Once exponential \(H\)-synchronization is known, periodic parabolic smoothing and
the two-dimensional enstrophy cancellation upgrade it to exponential
\(V\)-synchronization for both exact and Type-I feedbacks.  This upgrade requires
only boundedness of the feedback and introduces no further geometric or
gain--resolution hypothesis.

We have restricted the coarse theory to Type-I observations in order to keep
the dichotomy between the resonant and kernel-free regimes and its two analytic mechanisms in focus.
Type-II observations, including raw nodal reconstructions, introduce
second-order interpolation leakage and require a mixed strong-energy analysis.
Their treatment is left for future work.  Other natural extensions include
localized observations, closed surfaces, bounded and nonhomogeneous-boundary
domains, and boundary-anchored rank-one geometries.

\appendix

\section{Well-posedness of bounded-feedback nudged systems}
\label{sec:wellposed}

The exact and coarse feedbacks considered in this paper are bounded linear
perturbations of the periodic two-dimensional Navier--Stokes equations:
\[
  Mz=(z\cdot m)m,
  \qquad
  M_hz=\bigl(I_h(z\cdot m)\bigr)m.
\]
For the coarse operator, it is enough that
\(I_h:L^2(\T^2)\to L^2(\T^2)\) be bounded.  None of the geometric,
kernel-free, or approximation hypotheses used for synchronization enters the
existence theory.

\begin{theorem}[Strong well-posedness with bounded feedback]
\label{thm:wellposed}
Let \(T>0\), \(v_0\in V\),
\(f,u\in L^2(0,T;H)\),
and let
\(
  \mathcal M(t)\in
  \mathcal L\bigl(L^2(\T^2)^2,L^2(\T^2)^2\bigr)
\)
be strongly measurable in \(t\), with
\begin{equation}
\label{eq:wellposed-M-bound}
  \mathfrak M_T
  :=
  \operatorname*{ess\,sup}_{0<t<T}
  \|\mathcal M(t)\|_{\mathcal L(L^2,L^2)}
  <\infty.
\end{equation}
Then, for every \(\mu\ge0\), the system
\begin{equation}
\label{eq:wellposed-nudged}
  v_t+\nu Av+B(v,v)
  =
  f-\mu P_\sigma\mathcal M(t)(v-u),
  \qquad
  v(0)=v_0,
\end{equation}
has a unique strong solution satisfying
\[
  v\in C([0,T];V)\cap L^2(0,T;D(A)),
  \qquad
  v_t\in L^2(0,T;H).
\]
Among strong solutions with initial data in \(V\), the solution depends
continuously on the initial data in the \(H\)-metric.

Consequently, the solution is global whenever \(f\), \(u\), and
\(\mathcal M\) satisfy the corresponding bounds on every finite time interval.
\end{theorem}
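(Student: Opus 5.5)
I will use the standard Fourier--Galerkin method, treating the feedback \(-\mu P_\sigma\mathcal M(t)(v-u)\) as a bounded lower-order forcing. Let \(P_N\) be the orthogonal projection onto the span of the first \(N\) Stokes eigenfunctions; on the torus these are divergence-free Fourier modes, so \(P_N\) commutes with \(A\). The Galerkin system is
\[
  v_N'+\nu Av_N+P_NB(v_N,v_N)=P_Nf-\mu P_NP_\sigma\mathcal M(t)(v_N-u),
  \qquad
  v_N(0)=P_Nv_0 .
\]
It is a finite-dimensional Carath\'eodory ODE: the right side is measurable in \(t\) by strong measurability of \(\mathcal M\), locally Lipschitz in \(v_N\), and has an integrable bound. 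So it has a unique local absolutely continuous solution, and the a priori bounds below extend it to \([0,T]\).

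Next I derive estimates uniform in \(N\). Testing with \(v_N\) and using \eqref{eq:cancellation}, the feedback term is bounded by \(\mu\mathfrak M_T(|v_N|+|u|)|v_N|\). Gronwall then bounds \(v_N\) in \(L^\infty(0,T;H)\cap L^2(0,T;V)\) in terms of \(|v_0|\), \(\|f\|_{L^2H}\), \(\|u\|_{L^2H}\), \(\mu\mathfrak M_T\) and \(T\). Testing with \(Av_N\) and using the periodic enstrophy cancellation \(b(v_N,v_N,Av_N)=0\) gives
\[
  \tfrac12\tfrac{d}{dt}\|v_N\|^2+\nu|Av_N|^2
  \le (|f|+\mu\mathfrak M_T(|v_N|+|u|))|Av_N|,
\]
which after Young's inequality bounds \(v_N\) in \(L^\infty(0,T;V)\cap L^2(0,T;D(A))\). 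Finally, \(|B(v_N,v_N)|\le C|v_N|^{1/2}\|v_N\||Av_N|^{1/2}\) lies in \(L^2(0,T)\) by the previous bounds, so reading \(v_N'\) off the equation bounds it in \(L^2(0,T;H)\).

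For the limit, Aubin--Lions gives a subsequence with \(v_N\to v\) strongly in \(L^2(0,T;V)\), weakly in \(L^2(0,T;D(A))\), and \(v_N'\rightharpoonup v_t\) in \(L^2(0,T;H)\). The feedback is linear and bounded, so \(\mathcal M(v_N-u)\rightharpoonup\mathcal M(v-u)\) weakly in \(L^2(0,T;L^2)\). The nonlinear term converges by the strong \(L^2V\) convergence together with the \(L^\infty V\) bound. Hence \(v\) is a strong solution. The regularity \(v\in C([0,T];V)\) follows from the Lions--Magenes lemma, since \(v\in L^2(D(A))\) and \(v_t\in L^2(H)\).

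For uniqueness and continuous dependence, take two strong solutions with difference \(w\) and test with \(w\):
\[
  \tfrac12\tfrac{d}{dt}|w|^2+\nu\|w\|^2
  \le |b(w,v_1,w)|+\mu\mathfrak M_T|w|^2
  \le \tfrac\nu2\|w\|^2+\bigl(\tfrac{C}{\nu}\|v_1\|^2+\mu\mathfrak M_T\bigr)|w|^2,
\]
using \eqref{eq:basic-nonlinear}. Since \(\|v_1\|^2\in L^1(0,T)\), Gronwall gives \(|w(t)|\le C_T|w(0)|\). The global statement follows because \(T\) is arbitrary and uniqueness makes solutions on nested intervals agree.

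I expect the main obstacle to be the measurability and limit passage for the time-dependent operator \(\mathcal M(t)\), namely that \(t\mapsto\mathcal M(t)z(t)\) is Bochner measurable for measurable \(z\). I would settle it by approximating \(z\) by simple functions, using the strong measurability of \(\mathcal M\) and the uniform bound \(\mathfrak M_T\). For the weak limit, I would pair against test functions and rely on the strong convergence of \(v_N\) in \(L^2(0,T;H)\), so that only weak-strong products appear.
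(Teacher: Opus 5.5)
Your proposal is correct and follows essentially the same route as the paper. Both use Fourier--Galerkin with Carath\'eodory local existence, the $L^2$ and enstrophy-cancellation estimates, an $L^2(0,T;H)$ bound on $v_N'$, Aubin--Lions--Simon and Lions--Magenes for the limit and $C([0,T];V)$ continuity, and a Gronwall argument on the difference for uniqueness and $H$-continuous dependence. The differences are cosmetic: you use a Ladyzhenskaya-type bound on $|B(v_N,v_N)|$ instead of the paper's $L^\infty$ bound, and weak rather than strong convergence of the feedback term; your simple-function argument for the Bochner measurability of $t\mapsto\mathcal M(t)z(t)$ fills in a detail the paper leaves implicit.
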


\begin{proof}
Let \(P_N\) be the orthogonal projection onto the first \(N\) Stokes modes.
We consider the Galerkin system
\[
  \frac{dv_N}{dt}
  +\nu Av_N
  +P_NB(v_N,v_N)
  +\mu P_NP_\sigma\mathcal M(t)v_N
  =
  P_Nf+\mu P_NP_\sigma\mathcal M(t)u,
\]
with \(v_N(0)=P_Nv_0\).
Its right-hand side is measurable in time and locally Lipschitz in \(v_N\), so
Caratheodory's theorem gives a local absolutely continuous solution.

Testing with \(v_N\), using \(b(v_N,v_N,v_N)=0\), and applying
\eqref{eq:wellposed-M-bound}, Poincare's inequality, and Young's inequality give
\[
  \frac{d}{dt}|v_N|^2
  +
  \nu\|v_N\|^2
  \le
  C_{\nu,\mu,\mathfrak M_T}
  \bigl(
    |f|^2+|u|^2+|v_N|^2
  \bigr).
\]
Gronwall's inequality therefore yields an \(N\)-independent bound in
\[
  L^\infty(0,T;H)\cap L^2(0,T;V),
\]
and in particular extends the Galerkin solution to all of \([0,T]\).

Testing instead with \(Av_N\), we use the periodic two-dimensional enstrophy
cancellation
\[
  (B(v_N,v_N),Av_N)=0.
\]
Indeed, if \(\omega_N=\operatorname{curl}v_N\), then we have
\[
  (B(v_N,v_N),Av_N)
  =
  \int_{\T^2}
    (v_N\cdot\nabla\omega_N)\omega_N\,dx
  =
  0
\]
by periodicity and \(\nabla\cdot v_N=0\).  The feedback and forcing terms are
estimated by boundedness of \(\mathcal M(t)\) and Young's inequality, giving
\[
  \frac{d}{dt}\|v_N\|^2
  +
  \nu|Av_N|^2
  \le
  C_{\nu,\mu,\mathfrak M_T}
  \bigl(
    |f|^2+|u|^2+|v_N|^2
  \bigr).
\]
Together with the \(L^2\)-estimate, this shows that
\[
  (v_N)
  \quad\text{is bounded in}\quad
  L^\infty(0,T;V)\cap L^2(0,T;D(A)).
\]

The Galerkin equation and the periodic estimate
\[
  |B(v_N,v_N)|
  \le
  C\|v_N\|_{L^\infty}\|v_N\|
  \le
  C|Av_N|\|v_N\|
\]
then imply that \((\partial_t v_N)\) is bounded in \(L^2(0,T;H)\).
The Aubin--Lions--Simon theorem \cite{Simon1987} therefore gives, after passing to a subsequence,
\[
  v_N\to v
  \quad\text{strongly in }L^2(0,T;V),
  \qquad
  v_N\rightharpoonup v
  \quad\text{weakly in }L^2(0,T;D(A)).
\]
The strong convergence is sufficient to pass to the nonlinear term.  Moreover, we have
\[
  |\mathcal M(t)(v_N-v)|
  \le
  \mathfrak M_T|v_N-v|,
\]
so the feedback term converges strongly in \(L^2(0,T;H)\).  Hence \(v\) solves
\eqref{eq:wellposed-nudged} and satisfies
\[
  v\in L^\infty(0,T;V)\cap L^2(0,T;D(A)),
  \qquad
  v_t\in L^2(0,T;H).
\]
The Lions--Magenes theorem, applied with
\(D(A)\hookrightarrow V\hookrightarrow D(A)'\), gives
\(v\in C([0,T];V)\), with \(v(0)=v_0\).

For uniqueness, let \(z=v^{(1)}-v^{(2)}\) be the difference of two strong
solutions driven by the same data.  Testing its equation with \(z\), using
\(b(v^{(2)},z,z)=0\), Ladyzhenskaya's inequality, and boundedness of
\(\mathcal M(t)\), we obtain
\[
  \frac{d}{dt}|z|^2
  \le
  \left(
    2\mu\mathfrak M_T
    +
    \frac{C}{\nu}\|v^{(1)}(t)\|^2
  \right)
  |z|^2.
\]
Since \(v^{(1)}\in L^2(0,T;V)\), Gronwall's inequality yields
\[
  |z(t)|^2
  \le
  |z(0)|^2
  \exp\left(
    2\mu\mathfrak M_Tt
    +
    \frac{C}{\nu}
    \int_0^t\|v^{(1)}(s)\|^2\,ds
  \right).
\]
This proves uniqueness and continuous dependence in the \(H\)-metric.
\end{proof}

\begin{remark}[Exact and coarse realizations]
For exact projectional feedback,
\[
  \mathcal M(t)z=(z\cdot m(t))m(t),
  \qquad
  \|\mathcal M(t)\|_{\mathcal L(L^2,L^2)}\le1.
\]
For coarse scalar feedback,
\[
  \mathcal M(t)z
  =
  \bigl(I_h(t)(z\cdot m(t))\bigr)m(t),
  \qquad
  \|\mathcal M(t)\|_{\mathcal L(L^2,L^2)}
  \le
  \|I_h(t)\|_{\mathcal L(L^2,L^2)}.
\]
Thus Theorem~\ref{thm:wellposed} applies whenever the scalar observation
operator is \(L^2\)-bounded.  Uniformity in \(h\) is needed for the coarse-data
synchronization theory, but not for well-posedness at a fixed resolution.
Likewise, the estimates establish well-posedness for each fixed gain \(\mu\);
the synchronization arguments do not require bounds uniform in \(\mu\).
\end{remark}

\section*{Acknowledgements}

Part of this work was conducted during a visit to Jiangxi Normal University, at
the invitation of Professors Ming Mei and Zejia Wang.  I am grateful to them for
their invitation and hospitality, and to the PDE community in Nanchang for the
stimulating mathematical environment during the visit.
This work was partially supported by the Natural Sciences and Engineering
Research Council of Canada (NSERC) through a Discovery Grant.


\end{document}